\definecolor{linkblue}{RGB}{1,1,190}
\definecolor{citered}{RGB}{190,1,1}
\theoremstyle{plain}
\newtheorem{theorem}{\bf Theorem}[section]
\newtheorem{proposition}[theorem]{\bf Proposition}
\newtheorem{lemma}[theorem]{\bf Lemma}
\newtheorem{corollary}[theorem]{\bf Corollary}
\theoremstyle{definition}
\newtheorem{example}[theorem]{\bf Example}
\numberwithin{equation}{section}
\makeatletter\@namedef{subjclassname@2020}{\textup{2020} Mathematics Subject Classification}\makeatother
\begin{document}
\title[On the system of length sets of power monoids]{On the system of length sets of power monoids}
\author{Andreas Reinhart}
\address{Institut f\"ur Mathematik und Wissenschaftliches Rechnen, Karl-Franzens-Universit\"at Graz, NAWI Graz, Heinrichstra{\ss}e 36, 8010 Graz, Austria}
\email{andreas.reinhart@uni-graz.at}
\keywords{fully elastic, length set, power monoid}
\subjclass[2020]{11B13, 11B30, 20M13}
\thanks{This work was supported by the Austrian Science Fund FWF, Project Number P36852}

\begin{abstract}
The set $\mathcal{P}_{{\rm fin},0}(\mathbb{N}_0)$ of all finite subsets of $\mathbb{N}_0$ containing the zero element is a monoid with set addition as operation. If a set $A\in\mathcal{P}_{{\rm fin},0}(\mathbb{N}_0)$ can be written in the form $A=\sum_{i=1}^{\ell} A_i$ with $\ell\in\mathbb{N}_0$ and indecomposable elements $(A_i)_{i=1}^{\ell}$ of $\mathcal{P}_{{\rm fin},0}(\mathbb{N}_0)$, then $\ell$ is a factorization length of $A$ and $\mathsf{L}(A)\subseteq\mathbb{N}_0$ denotes the set of all possible factorization lengths of $A$. We show that for each rational number $q\geq 1$, there is some $A\in\mathcal{P}_{{\rm fin},0}(\mathbb{N}_0)$ such that $q=\frac{\max(\mathsf{L}(A))}{\min(\mathsf{L}(A))}$. This supports a Conjecture of Fan and Tringali.
\end{abstract}

\maketitle

\section{Introduction}\label{Section 1}

Let $M$ be a submonoid of an additive abelian group. Set addition of nonempty finite subsets of $M$ is a classic topic in additive combinatorics with innumerable facets. The sets $\mathcal{P}_{{\rm fin}}(M)$ resp. $\mathcal{P}_{{\rm fin},0}(M)$ of all nonempty finite subsets of $M$ resp. of all finite subsets of $M$ containing the zero element, are additive monoids, called the (finitary resp. reduced finitary) power monoid of $M$, with set addition as operation and the set $\{0\}$ being its zero element. Suppose for the rest of this paragraph that $x+y=0$ implies $x=y=0$ for all $x,y\in M$. A nonempty finite subset $A\subseteq M$ is said to be {\it indecomposable} if any equation of the form $A=B+C$, with $B,C\subseteq M$, implies that $B=\{0\}$ or $C=\{0\}$. Thus, the indecomposable sets are precisely the atoms (irreducible elements) of the respective power monoids. The interplay between set addition problems of subsets of $M$ and properties of associated power monoids was first studied by Fan and Tringali in their pivotal paper \cite{FaTr18}.

\smallskip
Algebraic and arithmetic properties of power monoids were first studied by Tamura and Shafer \cite{TaSh67} and found a strong renewed interest in the last decade (for a sample of recent work, see \cite{AnTr21,BiGe25,GaTr25,Tr25,TrYa25a,TrYa25b}). Factorization theory describes the non-uniqueness of factorizations of elements in monoids and domains and relates arithmetic invariants with algebraic invariants of the objects under consideration. Length sets are the best investigated arithmetic invariants. To recall definitions, let $\mathcal{H}$ be a (suitable) additive monoid (such as the power monoid $\mathcal{P}_{{\rm fin}}(M)$) and let $A\in\mathcal{H}$. If $A=\sum_{i=1}^{\ell} A_i$ with $\ell\in\mathbb{N}_0$ and irreducible elements $(A_i)_{i=1}^{\ell}$ of $\mathcal{H}$, then $\ell$ is called a factorization length and the set $\mathsf{L}(A)\subseteq\mathbb{N}_0$ of all possible factorization lengths is called the length set of $A$. Then $\mathcal{L}(\mathcal{H})=\{\mathsf{L}(A)\mid A\in\mathcal{H},\mathsf{L}(A)\not=\emptyset\}$ is the system of length sets of $\mathcal{H}$. For large classes of monoids (including Krull domains with finite class group) the length sets in $\mathcal{L}(\mathcal{H})$ are highly structured. Roughly speaking, they are generalized arithmetic progressions with global bounds on all parameters (for an overview, see \cite{GeHK06,Sc16}). On the other hand, there are monoids and domains for which every subset resp. every finite subset of $\mathbb{N}_{\geq 2}$ occurs as a length set (see \cite{FaWi24,GeGo25,GeKa25,Go19} for recent progress in this direction).

\smallskip
All length sets of $\mathcal{P}_{{\rm fin}}(\mathbb{N}_0)$ and of $\mathcal{P}_{{\rm fin},0}(\mathbb{N}_0)$ are finite, and a Conjecture of Fan and Tringali (formulated in \cite[Section 5]{FaTr18}) states that these power monoids have the property that every nonempty finite subset of $\mathbb{N}_{\geq 2}$ occurs as a length set (their conjecture as well as our results deal with more general monoids $M$ and with various classes of power monoids but, for simplicity, we restrict the discussion to the power monoid $\mathcal{P}_{{\rm fin},0}(\mathbb{N}_0)$). The Fan-Tringali Conjecture is in contrast to a result of Bienvenu and Geroldinger (\cite[Theorem 6.1]{BiGe25}), which says that almost all (in a natural sense of density) elements of $\mathcal{P}_{{\rm fin},0}(\mathbb{N}_0)$ are atoms (note that, for the length set $\mathsf{L}(A)$ of an atom $A$ we have $\mathsf{L}(A)=\{1\}$). This indicates that any progress on the Fan-Tringali Conjecture is not easy to get. Nevertheless, there are results supporting the conjecture: every positive integer $d\in\mathbb{N}$ occurs as a distance of a length set of $\mathcal{P}_{{\rm fin},0}(\mathbb{N}_0)$ and, for every $k\geq 2$, the union of all length sets containing $k$ equals $\mathbb{N}_{\geq 2}$ (\cite[Theorem 4.11]{FaTr18}).

\smallskip
The goal of the present paper is to establish further progress towards the Fan-Tringali Conjecture by determining the set of elasticities of $\mathcal{P}_{{\rm fin},0}(\mathbb{N}_0)$. For a nonempty set $L\subseteq\mathbb{N}$, $\rho(L)=\frac{\sup(L)}{\min(L)}$ is the elasticity of $L$ (and $\rho(\{0\})=1$ by definition) and $\{\rho(L)\mid L\in\mathcal{L}(\mathcal{H})\}\subseteq\mathbb{Q}_{\geq 1}\cup\{\infty\}$ is the {\it set of elasticities} of $\mathcal{H}$. The monoid $\mathcal{H}$ is called {\it fully elastic} if $\mathsf{L}(x)\not=\emptyset$ for each nonunit $x\in\mathcal{H}$ and for every rational number $q$ with $1\leq q<\sup\{\rho(L)\mid L\in\mathcal{L}(\mathcal{H})\}$ there is some $L\in\mathcal{L}(\mathcal{H})$ with $\rho(L)=q$. Sets of elasticities and, in particular, their suprema are studied since decades (see the survey \cite{An97a} and \cite{CoTr23,Gr22} for some recent progress). Among others we know that transfer Krull monoids (including all integrally closed noetherian domains) and more are fully elastic (\cite[Theorem 3.1]{GeZh19}, \cite{GeKh22,Zh19}). Strongly primary monoids (including all numerical monoids and one-dimensional local noetherian domains) are not fully elastic (apart from trivial cases, see \cite{BaCh14}, \cite[Theorem 2.2]{ChHoMo06} and \cite[Theorem 5.5]{GeScZh17}).

\smallskip
Since $\mathcal{P}_{{\rm fin}}(\mathbb{N}_0)$ contains a cancellative prime element, it is not too difficult to show that $\mathcal{P}_{{\rm fin}}(\mathbb{N}_0)$ is fully elastic (\cite[Proposition 5.13]{GeKh22}), but the situation is very different for $\mathcal{P}_{{\rm fin},0}(\mathbb{N}_0)$. We formulate the main result of this paper saying, in particular, that $\mathcal{P}_{{\rm fin},0}(\mathbb{N}_0)$ is fully elastic. The required definitions are provided in Section~\ref{Section 2}.

\begin{theorem}\label{Theorem 1.1}
Let $G$ be an additive abelian group, let $M\subseteq G$ be an atomic non-torsion submonoid and let $\mathcal{H}^*\subseteq\mathcal{P}_{{\rm fin}}(\mathbb{N}_0)$ be the submonoid generated by $\big\{\{1\},\{2,n\}\mid n\in\mathbb{N}_{\geq 3}\big\}$. Then the monoids $\mathcal{H}=\mathcal{P}_{{\rm fin}}(M)$ and $\mathcal{H}=\mathcal{P}_{{\rm fin},\times}(M)$ have the following properties.
\begin{enumerate}
\item $\mathcal{H}^*\subseteq\mathcal{L}(\mathcal{H})$.
\item $\{[k,k+2]\mid k\in\mathbb{N}_{\geq 2}\}\subseteq\mathcal{L}(\mathcal{H})$.
\item $\mathcal{H}$ is fully elastic, whence for each $q\in\mathbb{Q}_{\geq 1}$ there is some $L\in\mathcal{L}(\mathcal{H})$ with $q=\frac{\max(L)}{\min(L)}$.
\end{enumerate}
\end{theorem}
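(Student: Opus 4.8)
\emph{Overall strategy and a localization.} I would establish (1) and (2) directly and deduce (3) from (1). Since $M$ is non-torsion it contains an element $x$ of infinite order, so $\langle x\rangle=\{jx:j\in\mathbb{N}_0\}\cong(\mathbb{N}_0,+)$. After the usual passage to the reduced monoid (which is also what the $\mathcal{P}_{{\rm fin},\times}$-variant accommodates), $\mathcal{P}_{{\rm fin},0}(\langle x\rangle)\cong\mathcal{P}_{{\rm fin},0}(\mathbb{N}_0)$ sits inside $\mathcal{H}$ as a \emph{divisor-closed} submonoid: if $B\subseteq\langle x\rangle$, $0\in B$, and $B=C+D$ in $\mathcal{H}$, then $0\in C\cap D$, whence $C,D\subseteq B\subseteq\langle x\rangle$; so the length set of such a $B$ is the same whether computed in $\mathcal{H}$ or in $\mathcal{P}_{{\rm fin},0}(\mathbb{N}_0)$. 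It therefore suffices, for (1) and (2), to realize each target set $L$ as $\mathsf{L}(B)$ for some finite $B\subseteq\mathbb{N}_0$ with $0\in B$. Also, $\mathcal{H}$ is atomic, so $\mathsf{L}(x)\neq\emptyset$ for every nonunit $x$, which is one of the two requirements for full elasticity.

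\emph{A separation lemma.} Next I would prove: if $B_1,\dots,B_r\subseteq\mathbb{N}_0$ are finite, each containing $0$, and placed on sufficiently separated scales (each $B_{i+1}$ a dilate of a fixed set by a factor much larger than $\max(B_1+\dots+B_i)$ times the trivial bound $\max(B_1+\dots+B_r)$ on the number of atoms), then $\mathsf{L}(B_1+\dots+B_r)=\mathsf{L}(B_1)+\dots+\mathsf{L}(B_r)$, with set addition on the right. The inclusion $\supseteq$ is clear; for $\subseteq$, in any factorization of the sum each atom is confined to a single scale or bridges two consecutive scales, and a bridging atom is forced, up to a perturbation absorbed by the separation, to be of the form $\{0,N\}$ with $N$ a scale shift, so one peels such atoms off and recurses. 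Two consequences: taking all $B_i=\{0,M_i\}$ with rapidly growing $M_i$ gives $\mathsf{L}=\{r\}$, so every singleton is a length set; and an isolated factor $\{0,M\}$ with $M$ huge adds exactly $1$ to every factorization length. (On a single scale, $\{0,M\}^j\cong[0,j]$ is far from rigid, so distinct scales are genuinely needed.)

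\emph{The building block for $\{2,n\}$, and the assembly.} For $n=3$ one checks $\mathsf{L}([0,3])=\{2,3\}$. For $n\geq 4$ something new is required: a set with an $n$-atom factorization and small diameter is, up to dilation, an interval $[0,m]$, and $\mathsf{L}([0,m])=[2,m]$ already contains every intermediate length; moreover $\{2,n\}$ is not a nontrivial sum of shorter length sets, so the separation lemma alone cannot produce it. I would construct an explicit $B_n\subseteq\mathbb{N}_0$ — a union of a few arithmetic progressions with carefully tuned common differences and lengths — that (a) has a factorization into two atoms of large cardinality, (b) has a factorization into $n$ atoms, and (c) is such that the numerical data $\max B_n$, $\min(B_n\setminus\{0\})$ and the residue pattern of $B_n$ force the number of atoms in any factorization to be $2$ or $n$; step (c) is a finite but delicate case analysis of all decompositions of $B_n$ into atoms. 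Granting $B_n$, part (1) follows: given $A\in\mathcal{H}^*$, write $A=\{1\}^{a_0}+\{2,n_1\}+\dots+\{2,n_r\}$ (the atoms of $\mathcal{H}^*$ being exactly $\{1\}$ and the sets $\{2,n\}$, $n\geq 3$), and realize $A$ by a sum of dilated copies of $B_{n_1},\dots,B_{n_r}$ together with $\{0,M_1\},\dots,\{0,M_{a_0}\}$ on separated scales; the separation lemma gives $\mathsf{L}=\{2,n_1\}+\dots+\{2,n_r\}+\{a_0\}=A$. Part (2) is analogous: $\mathsf{L}([0,4])=\{2,3,4\}=[2,4]$, so $[0,4]$ together with $\{0,M_1\},\dots,\{0,M_{k-2}\}$ on separated scales realizes $[2,4]+(k-2)=[k,k+2]$ for every $k\geq 2$. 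For (3): by (1), every two-element set $\{a,b\}$ with $2\leq a<b$ lies in $\mathcal{H}^*\subseteq\mathcal{L}(\mathcal{H})$ (it equals $\{1\}^{a-2}+\{2,b-a+2\}$), as does $\{1\}$; hence $\{\rho(L):L\in\mathcal{L}(\mathcal{H})\}\supseteq\{b/a:2\leq a<b\}\cup\{1\}=\mathbb{Q}_{\geq 1}$, so the supremum of the elasticities is $\infty$, and together with atomicity this is precisely full elasticity and yields the stated identity.

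\emph{Main obstacle.} The crux is step (c): producing a single finite set whose length set is the two-element set $\{2,n\}$ — simultaneously forcing a length-$2$ and a length-$n$ factorization while excluding \emph{every} intermediate length — since a single large gap $n-2$ cannot arise from gluing independent pieces. A secondary but real difficulty is making the scale-separation thresholds in the separation lemma explicit enough that the peeling-off argument is unconditional.
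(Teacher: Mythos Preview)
Your overall architecture is right --- reduce to $\mathcal{P}_{{\rm fin},0}(\mathbb{N}_0)$, build $\{2,n\}$-blocks, and glue by scale separation --- but the separation lemma as you state it is \emph{false}, and this is exactly the obstacle the paper is written to overcome. Take $X=[0,3]$ and $Y=\{0,n\}$ with $n$ arbitrarily large. Then $\mathsf{L}(X)=\{2,3\}$ and $\mathsf{L}(Y)=\{1\}$, but $X+Y=\{0,1\}+\{0,1,2,n,n+2\}$ and the second factor is an atom, so $2\in\mathsf{L}(X+Y)$ and $\mathsf{L}(X+Y)=\{2,3,4\}\neq\{3,4\}$. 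No amount of separation repairs this: the phenomenon is that $X$ admits two decompositions $\{0,1\}+\{0,1,2\}=\{0,1\}+\{0,2\}$ with the same first summand, and the two distinct complements can be interlaced across the gap to form a new atom. Your peeling-off sketch (``a bridging atom is forced to be of the form $\{0,N\}$'') fails precisely here.

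The paper's fix is to isolate the hypothesis under which separation \emph{does} give $\mathsf{L}(X+Y)=\mathsf{L}(X)+\mathsf{L}(Y)$: both $X$ and $Y$ must be \emph{relatively cancellative} (every equation $X=A+B=A+C$ forces $B=C$), and then $X+Y$ is again relatively cancellative, so the construction iterates. This has two consequences for your plan. For~(1), it is not enough to exhibit \emph{some} $B_n$ with $\mathsf{L}(B_n)=\{2,n\}$; you need a relatively cancellative one, and the known Fan--Tringali examples are not. The paper builds such $B_n$ by a recursive construction whose factorization analysis is substantially more delicate than a residue-class argument. For~(2), your proposal to shift $\mathsf{L}([0,4])=[2,4]$ by adding $\{0,M_i\}$'s fails outright, since $[0,4]$ is not relatively cancellative (indeed $[0,4]+\{0,M\}$ has $2$ in its length set for all large $M$, by the same mechanism as above). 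The paper instead realizes $[2,4]$ and $[3,5]$ by explicit ad hoc sets, and for $k\ge 4$ observes $[k,k+2]=\{k-4\}+\{2,3\}+\{2,3\}\in\mathcal{H}^*$, reducing to~(1). Your derivation of~(3) from~(1) is fine and matches the paper.
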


\smallskip
In Section~\ref{Section 2}, we gather the required notions and definitions regarding (power) monoids and factorization theory. Clearly, there are nonempty finite subsets $A,B,C\subseteq\mathbb{N}_0$ with $A+B=A+C$, but $B$ and $C$ are distinct (in technical terms, this means that $A$ is not cancellative). The crucial new idea of our approach is the introduction of a weaker concept, called {\it relative cancellativity}.

\section{Notation and terminology}\label{Section 2}

We continue with some background on monoids and factorizations. For the sake of completeness, we introduce (or reiterate) all important concepts in the more general setting of (commutative) monoids.

\smallskip
Let $\mathbb{N}$, $\mathbb{N}_0$, $\mathbb{Z}$ and $\mathbb{Q}$ denote the sets of positive integers, nonnegative integers, integers and rational numbers, respectively. Let $x,y\in\mathbb{Z}$ and let $A,B\subseteq\mathbb{Z}$. Then let $A+B=\{a+b\mid a\in A,b\in B\}$ be the {\it sumset} of $A$ and $B$. We set $x+A=\{x+a\mid a\in A\}$ and $[x,y]=\{z\in\mathbb{Z}\mid x\leq z\leq y\}$. For each $t\in\mathbb{N}$, we set $\mathbb{N}_{\geq t}=\{z\in\mathbb{N}\mid z\geq t\}$ and for each $r\in\mathbb{Q}$, we set $\mathbb{Q}_{\geq r}=\{z\in\mathbb{Q}\mid z\geq r\}$. Throughout this note, all monoids are commutative semigroups with identity.

\smallskip
Let $M$ be an additively written monoid (with identity $0$) and let $a\in M$. If $b\in M$, then we write $b\mid_M a$ (``$b$ is a divisor of $a$'') if $a=b+c$ for some $c\in M$. Let $M^{\times}=\{a\in M\mid a+b=0$ for some $b\in M\}$ be the {\it set of units} of $M$ and let $\mathcal{A}(M)=\{x\in M\setminus M^{\times}\mid$ for all $y,z\in M$ with $x=y+z$, $y\in M^{\times}$ or $z\in M^{\times}\}$ be the {\it set of atoms} of $M$. The element $a$ is called {\it cancellative} if for all $b,c\in M$ with $a+b=a+c$, it follows that $b=c$. We say that $a$ is {\it relatively cancellative} if for all $b,c,d\in M$ with $a=b+c=b+d$, it follows that $c=d$. The monoid $M$ is called {\it reduced} if $M^{\times}=\{0\}$ and $M$ is called {\it cancellative} if every element of $M$ is cancellative. If $b,c\in M$, then $b$ and $c$ are called {\it relatively prime} if for all $t\in M$ with $t\mid_M b$ and $t\mid_M c$, it follows that $t\in M^{\times}$.

\smallskip
Clearly, every unit of $M$ is cancellative. Also note that every cancellative element of $M$ is relatively cancellative and if $M$ is reduced, then every atom of $M$ is relatively cancellative. Clearly, divisors of cancellative elements are cancellative and divisors of relatively cancellative elements are relatively cancellative. Also note that sums of cancellative elements are cancellative, but sums of relatively cancellative elements can fail to be relatively cancellative (see Example~\ref{Example 3.3} below).

\smallskip
We set $\mathsf{L}(a)=\{n\in\mathbb{N}_0\mid$ there are some atoms $(u_i)_{i=1}^n$ of $M$ such that $a=\sum_{i=1}^n u_i\}$, called the {\it length set} of $a$. We say that $M$ is {\it atomic} if every nonunit of $M$ is a finite sum of atoms. Moreover, $M$ is said to be {\it non\textnormal{-}torsion} if there exists some $x\in M$ such that $\{nx\mid n\in\mathbb{N}\}$ is infinite. Set $\mathcal{L}(M)=\{\mathsf{L}(x)\mid x\in M,\mathsf{L}(x)\not=\emptyset\}$, called the {\it system of length sets} of $M$. For each nonempty $L\subseteq\mathbb{N}$, set $\rho(L)=\frac{\sup(L)}{\min(L)}$ and set $\rho(\emptyset)=\rho(\{0\})=1$. For each $x\in M$, let $\rho(x)=\rho(\mathsf{L}(x))$, called the {\it elasticity} of $x$. Set $\rho(M)=\sup(\{\rho(x)\mid x\in M\})$, called the {\it elasticity} of $M$. We say that $M$ is {\it fully elastic} if $M$ is atomic and for each $r\in\mathbb{Q}_{\geq 1}$ with $r<\rho(M)$, there is some $x\in M$ such that $\rho(x)=r$.

\smallskip
In this paragraph let $M$ be reduced. Let $\mathsf{Z}(M)$ be the free abelian monoid with basis $\mathcal{A}(M)$. We let $\mathsf{Z}(M)$ always be a multiplicatively written monoid, called the {\it factorization monoid} of $M$. Observe that $\mathsf{Z}(M)$ is cancellative. Let $|\cdot|:\mathsf{Z}(M)\rightarrow\mathbb{N}_0$ be defined by $|z|=n$ if $n\in\mathbb{N}_0$ and $(u_i)_{i=1}^n$ are atoms of $M$ with $z=\prod_{i=1}^n u_i$. (This map is well-defined, since the representation of $z$ as a product of atoms of $M$ is unique up to order.) Let $\mathsf{Z}(x)=\{\prod_{i=1}^n u_i\mid n\in\mathbb{N}_0$ and $(u_i)_{i=1}^n$ atoms of $M$ with $x=\sum_{i=1}^n u_i\}$ for each $x\in M$, called the {\it set of factorizations} of $x$. It is straightforward to prove that $\mathsf{L}(x)=\{|z|\mid z\in\mathsf{Z}(x)\}$ for each $x\in M$. Observe that each two elements $x,y\in\mathsf{Z}(M)$ have a unique greatest common divisor denoted by ${\rm gcd}(x,y)$.

\smallskip
Let $\mathcal{P}_{\rm fin}(M)$ be the set of nonempty finite subsets of $M$ and let $+:\mathcal{P}_{\rm fin}(M)\times\mathcal{P}_{\rm fin}(M)\rightarrow\mathcal{P}_{\rm fin}(M)$ be defined by $A+B=\{a+b\mid a\in A,b\in B\}$ for all $A,B\in\mathcal{P}_{\rm fin}(M)$. Then $\mathcal{P}_{\rm fin}(M)$ (equipped with $+$) is a monoid, called the {\it finitary power monoid} of $M$. Let $\mathcal{P}_{{\rm fin},\times}(M)$ be the set of finite subsets $A$ of $M$ such that $A\cap M^{\times}\not=\emptyset$. Then $\mathcal{P}_{{\rm fin},\times}(M)$ is a submonoid of $\mathcal{P}_{\rm fin}(M)$, called the {\it restricted finitary power monoid} of $M$. Let $\mathcal{P}_{{\rm fin},0}(M)$ be the set of finite subsets $A$ of $M$ such that $0\in A$. Then $\mathcal{P}_{{\rm fin},0}(M)$ is a submonoid of $\mathcal{P}_{{\rm fin},\times}(M)$, called the {\it reduced finitary power monoid} of $M$.

\smallskip
In Section~\ref{Section 3}, we will use without further mention that $\mathcal{P}_{\rm fin}(\mathbb{N}_0)$ and $\mathcal{P}_{{\rm fin},\times}(\mathbb{N}_0)=\mathcal{P}_{{\rm fin},0}(\mathbb{N}_0)$ are reduced atomic monoids and $\mathcal{L}(\mathcal{P}_{{\rm fin},0}(\mathbb{N}_0))\subseteq\mathcal{L}(\mathcal{P}_{\rm fin}(\mathbb{N}_0))\subseteq\{\{0\},\{1\}\}\cup\{A\subseteq\mathbb{N}_{\geq 2}\mid A$ is finite and nonempty$\}$. For more details, see \cite[Proposition 3.2 and Corollary 3.6]{FaTr18}. We will also use the fact that $A+(B\cup C)=(A+B)\cup (A+C)$ for all $A,B,C\subseteq\mathbb{Z}$ without further mention.

\section{Results}\label{Section 3}

We start with a result that will enable us to show that sumsets of length sets of relatively cancellative elements are again length sets of relatively cancellative elements. Note that for each $v\in\mathbb{Z}$, $|v|$ denotes the absolute value of $v$ in the proof of Proposition~\ref{Proposition 3.1} below (and it is not to be confused with $|\cdot|$ defined on the factorization monoid). For each $X\subseteq\mathbb{Z}$, let ${\rm gcd}(X)\in\mathbb{N}_0$ be the greatest common divisor of $X$ (where $\mathbb{Z}$ is viewed as a monoid equipped with the ordinary multiplication). Observe that ${\rm gcd}(\emptyset)={\rm gcd}(\{0\})=0$ and ${\rm gcd}(X)>0$ if $X\nsubseteq\{0\}$.

\begin{proposition}\label{Proposition 3.1}
Let $\mathcal{H}=\mathcal{P}_{{\rm fin},0}(\mathbb{N}_0)$ and let $X,Y\in\mathcal{H}$ be relatively cancellative such that ${\rm gcd}(Y)>2\max(X)$.
\begin{enumerate}
\item[(1)] For all $x,x^{\prime},x^{\prime\prime}\in X$ and $y,y^{\prime},y^{\prime\prime}\in Y$ with $x+y=x^{\prime}+x^{\prime\prime}+y^{\prime}+y^{\prime\prime}$, it follows that $x=x^{\prime}+x^{\prime\prime}$ and $y=y^{\prime}+y^{\prime\prime}$.
\item[(2)] For all $A,B\in\mathcal{H}$ with $A+B=X+Y$, we have that $X=(A\cap X)+(B\cap X)$, $Y=(A\cap Y)+(B\cap Y)$ and $A=(A\cap X)+(A\cap Y)$.
\item[(3)] $X+Y$ is relatively cancellative, $\mathsf{Z}(X+Y)=\mathsf{Z}(X)\mathsf{Z}(Y)$ and $\mathsf{L}(X+Y)=\mathsf{L}(X)+\mathsf{L}(Y)$.
\end{enumerate}
\end{proposition}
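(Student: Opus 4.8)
The plan is to prove the three parts in order, with (1) as the engine and (3) a formal consequence of (2). Throughout, set $d=\gcd(Y)$; since $0\in Y$, $d$ divides every element of $Y$, and $d>2\max(X)\geq 0$ forces $d\geq 1$ (so $Y\neq\{0\}$). For part (1): reducing $x+y=x'+x''+y'+y''$ modulo $d$ gives $x\equiv x'+x''\pmod d$, and since $0\leq x\leq\max(X)<d$ and $0\leq x'+x''\leq 2\max(X)<d$, we get $x=x'+x''$ and then $y=y'+y''$. This is the only point where $\gcd(Y)>2\max(X)$ is used, and it also yields two facts I will use repeatedly: every $z\in X+Y$ has a \emph{unique} decomposition $z=\xi(z)+\eta(z)$ with $\xi(z)\in X$ and $\eta(z)\in Y$ (indeed $\xi(z)$ is the residue of $z$ modulo $d$, which happens to lie in $X$), and $\xi(a+b)=\xi(a)+\xi(b)$, $\eta(a+b)=\eta(a)+\eta(b)$ for all $a,b\in X+Y$ (again because $\xi(a)+\xi(b)\leq 2\max(X)<d$).

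For part (2): since $0\in A\cap B$ we have $A,B\subseteq A+B=X+Y$, so $\xi,\eta$ are defined on $A$ and $B$; applying them to $A+B=X+Y$ and using additivity gives $X=\xi(A)+\xi(B)$ and $Y=\eta(A)+\eta(B)$. To get $X=(A\cap X)+(B\cap X)$, write $x\in X$ as $x+0\in A+B$ and apply (1) to put $x$ in $(A\cap X)+(B\cap X)$, and for $a\in A\cap X$, $b\in B\cap X$ apply (1) to $a+b\in X+Y$ to get $\eta(a+b)=0$, i.e.\ $a+b\in X$; symmetrically $Y=(A\cap Y)+(B\cap Y)$. Since $A\cap X\subseteq\xi(A)$ and $B\cap X\subseteq\xi(B)$, the chain $X=(A\cap X)+(B\cap X)\subseteq\xi(A)+(B\cap X)\subseteq\xi(A)+\xi(B)=X$ collapses, so $\xi(A)+(B\cap X)=X$; now $B\cap X$ divides $X$ and $X$ is relatively cancellative, so comparing $X=(B\cap X)+(A\cap X)=(B\cap X)+\xi(A)$ gives $A\cap X=\xi(A)$, and likewise $B\cap X=\xi(B)$; relative cancellativity of $Y$ gives $A\cap Y=\eta(A)$ and $B\cap Y=\eta(B)$. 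It remains to prove $A=(A\cap X)+(A\cap Y)=\xi(A)+\eta(A)$, where $\subseteq$ is just $a=\xi(a)+\eta(a)$. For $\supseteq$ — the hard part — I would combine the identity $[(A\cap X)+(A\cap Y)]+[(B\cap X)+(B\cap Y)]=((A\cap X)+(B\cap X))+((A\cap Y)+(B\cap Y))=X+Y=A+B$ with the inclusions $A\subseteq(A\cap X)+(A\cap Y)$ and $B\subseteq(B\cap X)+(B\cap Y)$ and promote them to equalities by an induction (on $|X|\cdot|Y|$, say) that reduces $(X,Y)$ to a proper divisor pair obtained by splitting off an atom and invokes the relative cancellativity supplied by the smaller cases, the base case being the elementary observation that $X\cup Y\subseteq B$ forces $\max(A)\leq\max(X)$, hence $A\subseteq X$. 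I expect this last step to be the main obstacle: everything established before it only pins down the ``coordinate projections'' of $A$ and $B$, and turning that into the set equality $A=\xi(A)+\eta(A)$ is precisely where relative cancellativity of \emph{both} $X$ and $Y$ must be spent.

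For part (3): relative cancellativity of $X+Y$ is immediate from (2), for if $X+Y=P_1+P_2=P_1+P_3$, then (2) applied to both factorizations gives $P_i=(P_i\cap X)+(P_i\cap Y)$ together with $X=(P_1\cap X)+(P_2\cap X)=(P_1\cap X)+(P_3\cap X)$ and the analogous identity over $Y$, so relative cancellativity of $X$ and of $Y$ yields $P_2\cap X=P_3\cap X$ and $P_2\cap Y=P_3\cap Y$, whence $P_2=P_3$. For $\mathsf{Z}(X+Y)=\mathsf{Z}(X)\mathsf{Z}(Y)$, the inclusion $\supseteq$ is clear by concatenating factorizations of $X$ and $Y$; for $\subseteq$, write $X+Y=U_1+\dots+U_n$ with atoms $U_i$ and induct on $n$: applying (2) with $A=U_1$, $B=U_2+\dots+U_n$, the identity $U_1=(U_1\cap X)+(U_1\cap Y)$ and atomicity of $U_1$ force $U_1\subseteq X$ or $U_1\subseteq Y$; in the first case (2) also gives $X=U_1+(B\cap X)$ and $B=(B\cap X)+Y$, the pair $(B\cap X,Y)$ satisfies the hypotheses (with $B\cap X$ a proper divisor of $X$), so by induction $U_2\cdots U_n$ is a factorization of $B\cap X$ times one of $Y$, and prepending $U_1$ yields a factorization of $X$ times one of $Y$; the case $U_1\subseteq Y$ is symmetric, with degenerate cases (such as $B\cap Y=\{0\}$, i.e.\ $Y=U_1$) handled directly. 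Finally, since $|\cdot|$ is additive on $\mathsf{Z}(\mathcal{H})$, $\mathsf{L}(X+Y)=\{|z|:z\in\mathsf{Z}(X+Y)\}=\{|z_1|+|z_2|:z_1\in\mathsf{Z}(X),\,z_2\in\mathsf{Z}(Y)\}=\mathsf{L}(X)+\mathsf{L}(Y)$.
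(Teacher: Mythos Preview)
Your treatment of (1), the ``forward'' halves of (2), and all of (3) is correct and essentially matches the paper: the paper writes $A',A'',B',B''$ where you use the projections $\xi,\eta$, but these are the same objects, and your derivation of $A\cap X=\xi(A)$, $A\cap Y=\eta(A)$ via the collapsing chain and relative cancellativity of $X$ (resp.\ $Y$) is exactly the paper's argument. Your proof of (3) is a mild variant of the paper's (you peel off one atom and recurse on $(B\cap X,Y)$; the paper sorts all atoms at once into those contained in $X$ and those contained in $Y$ and then invokes the relative cancellativity of $X+Y$), and both work once (2) is available.

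The genuine gap is exactly where you flag it: the inclusion $(A\cap X)+(A\cap Y)\subseteq A$. Your proposed induction on $|X|\cdot|Y|$ is not specified enough to succeed, and the natural attempts are circular. The identity $\tilde A+B=X+Y=A+B$ (with $\tilde A=(A\cap X)+(A\cap Y)$) would yield $A=\tilde A$ \emph{if} $X+Y$ were already relatively cancellative --- but that is precisely what (3) derives from (2). Passing to a proper divisor pair $(X',Y')$ and using the inductive relative cancellativity of $X'+Y'$ does not help, because the equation you need to cancel in still lives over $X+Y$, not over $X'+Y'$; and your stated ``base case'' ($X\cup Y\subseteq B\Rightarrow A\subseteq X$), while true, does not anchor an induction on $|X|\cdot|Y|$. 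The paper avoids induction here entirely with a direct trick: given $a'\in A\cap X$ (and $a'\neq 0$), relative cancellativity of $X$ forces $((A\cap X)\setminus\{a'\})+(B\cap X)\subsetneq X$, so one can pick $v'\in B\cap X$ with the property that any decomposition $a'+v'=e+f$ ($e\in A\cap X$, $f\in B\cap X$) has $e=a'$; choose $v''\in B\cap Y$ analogously. Then write $a'+v'+a''+v''\in X+Y=A+B$ as $a+b$, apply (1) to split along $X$ and $Y$, and the uniqueness built into $v',v''$ forces $\xi(a)=a'$, $\eta(a)=a''$, hence $a'+a''=a\in A$. This is the step your sketch is missing.
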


\begin{proof}
(1) Let $x,x^{\prime},x^{\prime\prime}\in X$ and $y,y^{\prime},y^{\prime\prime}\in Y$ with $x+y=x^{\prime}+x^{\prime\prime}+y^{\prime}+y^{\prime\prime}$. Then ${\rm gcd}(Y)\mid y-y^{\prime}-y^{\prime\prime}=x^{\prime}+x^{\prime\prime}-x$, and hence $|x^{\prime}+x^{\prime\prime}-x|=a{\rm gcd}(Y)$ for some $a\in\mathbb{N}_0$. Since $|x^{\prime}+x^{\prime\prime}-x|\leq\max(\{x,x^{\prime}+x^{\prime\prime}\})\leq 2\max(X)<{\rm gcd}(Y)$, we infer that $a=0$. Consequently, $x=x^{\prime}+x^{\prime\prime}$ and $y=y^{\prime}+y^{\prime\prime}$.

\medskip
(2) Let $A,B\in\mathcal{H}$ be such that $A+B=X+Y$. Set $A^{\prime}=\{x\in X\mid x+y\in A$ for some $y\in Y\}$, $A^{\prime\prime}=\{y\in Y\mid x+y\in A$ for some $x\in X\}$, $B^{\prime}=\{x\in X\mid x+y\in B$ for some $y\in Y\}$ and $B^{\prime\prime}=\{y\in Y\mid x+y\in B$ for some $x\in X\}$. Then $A\cap X\subseteq A^{\prime}$, $A\cap Y\subseteq A^{\prime\prime}$, $B\cap X\subseteq B^{\prime}$ and $B\cap Y\subseteq B^{\prime\prime}$. In particular, $A^{\prime},A^{\prime\prime},B^{\prime},B^{\prime\prime}\in\mathcal{H}$.

\smallskip
First we show that $X=(A\cap X)+(B\cap X)=(A\cap X)+B^{\prime}=A^{\prime}+B^{\prime}$. Let $z\in X$. Then $z=a+b$ for some $a\in A$ and $b\in B$. Clearly, there are some $x^{\prime},x^{\prime\prime}\in X$ and $y^{\prime},y^{\prime\prime}\in Y$ such that $a=x^{\prime}+y^{\prime}$ and $b=x^{\prime\prime}+y^{\prime\prime}$. Since $z=x^{\prime}+x^{\prime\prime}+y^{\prime}+y^{\prime\prime}$, it follows by (1) that $z=x^{\prime}+x^{\prime\prime}$ and $y^{\prime}=y^{\prime\prime}=0$. Observe that $a\in A\cap X$ and $b\in B\cap X$, and hence $z\in (A\cap X)+(B\cap X)$. This shows that $X\subseteq (A\cap X)+(B\cap X)\subseteq (A\cap X)+B^{\prime}\subseteq A^{\prime}+B^{\prime}$. Let $c\in A^{\prime}$ and $d\in B^{\prime}$. Then there are some $u,v\in Y$ such that $c+u\in A$ and $d+v\in B$. This implies that $c+d+u+v\in A+B=X+Y$, and thus $c+d\in X$ by (1).

\smallskip
It can be shown along the same lines that $Y=(A\cap Y)+(B\cap Y)=(A\cap Y)+B^{\prime\prime}=A^{\prime\prime}+B^{\prime\prime}$. Since $X$ and $Y$ are relatively cancellative, we have that $A^{\prime}=A\cap X$ and $A^{\prime\prime}=A\cap Y$. If $w\in A$, then $w=x+y$ for some $x\in X$ and $y\in Y$, and thus $x\in A^{\prime}$, $y\in A^{\prime\prime}$ and $w=x+y\in A^{\prime}+A^{\prime\prime}$. Consequently, $A\subseteq A^{\prime}+A^{\prime\prime}$. It remains to show that $A^{\prime}+A^{\prime\prime}\subseteq A$. Let $a^{\prime}\in A^{\prime}$ and $a^{\prime\prime}\in A^{\prime\prime}$.

\smallskip
Next we prove that there is some $v^{\prime}\in B^{\prime}$ such that for all $e\in A^{\prime}$ and $f\in B^{\prime}$ with $a^{\prime}+v^{\prime}=e+f$, $a^{\prime}=e$. If $a^{\prime}=0$, then set $v^{\prime}=0$. Now let $a^{\prime}\not=0$. Note that $A^{\prime}\setminus\{a^{\prime}\}\in\mathcal{H}$ and $(A^{\prime}\setminus\{a^{\prime}\})+B^{\prime}\subsetneq X$ (since $X$ is relatively cancellative and $X=A^{\prime}+B^{\prime}$). There is some $x\in X\setminus ((A^{\prime}\setminus\{a^{\prime}\})+B^{\prime})$. It is clear that $x=a^{\prime}+v^{\prime}$ for some $v^{\prime}\in B^{\prime}$. Let $e\in A^{\prime}$ and $f\in B^{\prime}$ be such that $a^{\prime}+v^{\prime}=e+f$. Then $e+f=x\in X\setminus ((A^{\prime}\setminus\{a^{\prime}\})+B^{\prime})$, and so $a^{\prime}=e$. It can be shown along similar lines that there is some $v^{\prime\prime}\in B^{\prime\prime}$ such that for all $e\in A^{\prime\prime}$ and $f\in B^{\prime\prime}$ with $a^{\prime\prime}+v^{\prime\prime}=e+f$, $a^{\prime\prime}=e$.

\smallskip
Observe that $a^{\prime}+v^{\prime}+a^{\prime\prime}+v^{\prime\prime}\in A^{\prime}+B^{\prime}+A^{\prime\prime}+B^{\prime\prime}=X+Y=A+B$, and thus $a^{\prime}+v^{\prime}+a^{\prime\prime}+v^{\prime\prime}=a+b$ for some $a\in A$ and $b\in B$. Since $A\subseteq A^{\prime}+A^{\prime\prime}$ and $B\subseteq B^{\prime}+B^{\prime\prime}$, there are some $x^{\prime}\in A^{\prime}$, $x^{\prime\prime}\in A^{\prime\prime}$, $y^{\prime}\in B^{\prime}$ and $y^{\prime\prime}\in B^{\prime\prime}$ such that $a=x^{\prime}+x^{\prime\prime}$ and $b=y^{\prime}+y^{\prime\prime}$. Since $a^{\prime}+v^{\prime}\in A^{\prime}+B^{\prime}=X$, $a^{\prime\prime}+v^{\prime\prime}\in A^{\prime\prime}+B^{\prime\prime}=Y$, $x^{\prime},y^{\prime}\in X$, $x^{\prime\prime},y^{\prime\prime}\in Y$ and $a^{\prime}+v^{\prime}+a^{\prime\prime}+v^{\prime\prime}=x^{\prime}+y^{\prime}+x^{\prime\prime}+y^{\prime\prime}$, we infer by (1) that $a^{\prime}+v^{\prime}=x^{\prime}+y^{\prime}$ and $a^{\prime\prime}+v^{\prime\prime}=x^{\prime\prime}+y^{\prime\prime}$. Therefore, $a^{\prime}=x^{\prime}$ and $a^{\prime\prime}=x^{\prime\prime}$. This implies that $a^{\prime}+a^{\prime\prime}=x^{\prime}+x^{\prime\prime}=a\in A$.

\medskip
(3) First we show that $X+Y$ is relatively cancellative. Let $A,B,C\in\mathcal{H}$ be such that $X+Y=A+B=A+C$. We infer by (2) that $X=(A\cap X)+(B\cap X)=(A\cap X)+(C\cap X)$, $Y=(A\cap Y)+(B\cap Y)=(A\cap Y)+(C\cap Y)$, $B=(B\cap X)+(B\cap Y)$ and $C=(C\cap X)+(C\cap Y)$. Since $X$ and $Y$ are relatively cancellative, it follows that $B\cap X=C\cap X$ and $B\cap Y=C\cap Y$, and thus $B=(B\cap X)+(B\cap Y)=(C\cap X)+(C\cap Y)=C$.

\smallskip
Next we prove that $\mathsf{Z}(X+Y)=\mathsf{Z}(X)\mathsf{Z}(Y)$. Clearly, $\mathsf{Z}(X)\mathsf{Z}(Y)\subseteq\mathsf{Z}(X+Y)$. Now let $z\in\mathsf{Z}(X+Y)$. There are some $n\in\mathbb{N}$ and atoms $(A_i)_{i=1}^n$ of $\mathcal{H}$ such that $z=\prod_{i=1}^n A_i$. If $j\in [1,n]$, then $A_j\mid_{\mathcal{H}}\sum_{i=1}^n A_i=X+Y$, and hence $A_j=(A_j\cap X)+(A_j\cap Y)$ by (2), which implies that $A_j\subseteq X$ or $A_j\subseteq Y$ (since $A_j$ is an atom of $\mathcal{H}$). Set $z^{\prime}=\prod_{i=1,A_i\subseteq X}^n A_i$, $z^{\prime\prime}=\prod_{i=1,A_i\subseteq Y}^n A_i$, $\overline{A}=\sum_{i=1,A_i\subseteq X}^n A_i$ and $\overline{B}=\sum_{i=1,A_i\subseteq Y}^n A_i$. Since $A_j\nsubseteq\{0\}=X\cap Y$ for each $j\in [1,n]$, we infer that $z=z^{\prime}z^{\prime\prime}$, and thus $X+Y=\overline{A}+\overline{B}$. Note that $(X+X)\cap (X+Y)\subseteq X$ and $(Y+Y)\cap (X+Y)\subseteq Y$ by (1). Using this fact, it follows by induction that $\overline{A}\subseteq X$ and $\overline{B}\subseteq Y$. In particular, we obtain that $X+Y=\overline{A}+Y=X+\overline{B}$. Since $X+Y$ is relatively cancellative, this implies that $X=\overline{A}$ and $Y=\overline{B}$, and thus $z^{\prime}\in\mathsf{Z}(X)$ and $z^{\prime\prime}\in\mathsf{Z}(Y)$. Therefore, $z=z^{\prime}z^{\prime\prime}\in\mathsf{Z}(X)\mathsf{Z}(Y)$.

\smallskip
Since $\mathsf{Z}(X+Y)=\mathsf{Z}(X)\mathsf{Z}(Y)$, it is obvious that $\mathsf{L}(X+Y)=\mathsf{L}(X)+\mathsf{L}(Y)$.
\end{proof}

Now we prove the promised additivity of length sets of relatively cancellative elements.

\begin{corollary}\label{Corollary 3.2}
Let $\mathcal{H}=\mathcal{P}_{{\rm fin},0}(\mathbb{N}_0)$ and let $X,Y\in\mathcal{H}$ be relatively cancellative. Then $\mathsf{L}(X)+\mathsf{L}(Y)=\mathsf{L}(W)$ for some relatively cancellative $W\in\mathcal{H}$.
\end{corollary}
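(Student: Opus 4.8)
The plan is to reduce the statement to Proposition~\ref{Proposition 3.1} by replacing $Y$ with a suitable dilation $dY$ ($d\in\mathbb{N}$ large) that has the same length set, is still relatively cancellative, and whose gcd exceeds $2\max(X)$. First I would dispose of the degenerate cases: if $X=\{0\}$ then $\mathsf{L}(X)+\mathsf{L}(Y)=\mathsf{L}(Y)$ and $W=Y$ works, and symmetrically if $Y=\{0\}$; so assume $X\neq\{0\}\neq Y$, whence $\max(X)\in\mathbb{N}$ and ${\rm gcd}(Y)\in\mathbb{N}$.

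The core of the argument is the study of the dilation map. For $d\in\mathbb{N}$, let $\phi_d\colon\mathcal{H}\rightarrow\mathcal{H}$, $A\mapsto dA=\{da\mid a\in A\}$; this is an injective monoid homomorphism, since $d(A+B)=dA+dB$ and $d\{0\}=\{0\}$. The key observation is that $\phi_d$ ``reflects decompositions'': if $A\in\mathcal{H}$ and $dA=B+C$ with $B,C\in\mathcal{H}$, then $d\mid b$ for every $b\in B$ (because $b=b+0\in B+C=dA\subseteq d\mathbb{N}_0$, using $0\in C$) and likewise $d\mid c$ for every $c\in C$; hence $B=dB_0$ and $C=dC_0$ with $B_0,C_0\in\mathcal{H}$, and $A=B_0+C_0$ by injectivity of $\phi_d$. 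From this I would deduce three facts by routine bookkeeping: (i) $A\in\mathcal{A}(\mathcal{H})$ if and only if $dA\in\mathcal{A}(\mathcal{H})$; (ii) $\mathsf{L}(dA)=\mathsf{L}(A)$ for every $A\in\mathcal{H}$ (a factorization $A=\sum_i U_i$ into atoms yields the factorization $dA=\sum_i dU_i$ into atoms by (i); conversely, in any factorization $dA=\sum_j V_j$ each atom $V_j$ divides $dA$, hence $V_j=dW_j$ for an atom $W_j$ by the reflection property, and then $A=\sum_j W_j$ by injectivity); (iii) $dX$ is relatively cancellative whenever $X$ is (if $dX=B+C=B+D$, write $B=dB_0$, $C=dC_0$, $D=dD_0$; then $X=B_0+C_0=B_0+D_0$, so $C_0=D_0$ and $C=D$).

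Finally, I would choose $d\in\mathbb{N}$ with $d>2\max(X)$, say $d=2\max(X)+1$, and set $Y'=dY=\phi_d(Y)$. Then $Y'$ is relatively cancellative by (iii), $\mathsf{L}(Y')=\mathsf{L}(Y)$ by (ii), and ${\rm gcd}(Y')=d\,{\rm gcd}(Y)\geq d>2\max(X)$. Applying Proposition~\ref{Proposition 3.1}(3) to the pair $X,Y'$ shows that $W:=X+Y'$ is relatively cancellative with $\mathsf{L}(W)=\mathsf{L}(X)+\mathsf{L}(Y')=\mathsf{L}(X)+\mathsf{L}(Y)$, which is the claim. The only genuine work lies in establishing (i)--(iii), i.e.\ that the dilation $\phi_d$ is compatible with the atomic and factorization structure of $\mathcal{H}$; this is where the defining feature $0\in A$ for all $A\in\mathcal{H}$ is used decisively, and it is the main (though modest) obstacle, everything else being a direct appeal to Proposition~\ref{Proposition 3.1}.
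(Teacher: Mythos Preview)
Your proposal is correct and follows exactly the same approach as the paper: dilate $Y$ by a factor $d>2\max(X)$ and apply Proposition~\ref{Proposition 3.1}(3). The paper's proof is terser---it simply asserts that the dilated set $Z=dY$ is relatively cancellative with $\mathsf{L}(Z)=\mathsf{L}(Y)$---whereas you supply the justification via the reflection property of $\phi_d$; this is a welcome level of detail, not a different method.
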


\begin{proof}
Without restriction, we can assume that $Y\not=\{0\}$. Let $d\in\mathbb{N}$ be such that $d>2\max(X)$ and set $Z=\{da\mid a\in Y\}$. Then $Z\in\mathcal{H}$ is relatively cancellative, $\mathsf{L}(Z)=\mathsf{L}(Y)$ and ${\rm gcd}(Z)\geq d>2\max(X)$. Set $W=X+Z$. By Proposition~\ref{Proposition 3.1}(3), $W\in\mathcal{H}$ is relatively cancellative and $\mathsf{L}(X)+\mathsf{L}(Y)=\mathsf{L}(X)+\mathsf{L}(Z)=\mathsf{L}(W)$.
\end{proof}

Next we want to point out that all the assumptions in Proposition~\ref{Proposition 3.1} are crucial.

\begin{example}\label{Example 3.3}
Let $\mathcal{H}=\mathcal{P}_{{\rm fin},0}(\mathbb{N}_0)$, $A=\{0,1,2,3\}$, $B=\{0,7\}$, $C=\{0,1\}$, $D=\{0,3,6,9\}$, $E=\{0,2\}$.
\begin{enumerate}
\item[(1)] $A$ and $D$ are not relatively cancellative and $B$, $C$ and $E$ are relatively cancellative.
\item[(2)] ${\rm gcd}(B)>2\max(A)$, ${\rm gcd}(D)>2\max(C)$ and ${\rm gcd}(E)=2\max(C)$.
\item[(3)] $\mathsf{L}(A+B)=\{2,3,4\}\supsetneq\{3,4\}=\mathsf{L}(A)+\mathsf{L}(B)$.
\item[(4)] $\mathsf{L}(C+D)=\{2,3,4\}\supsetneq\{3,4\}=\mathsf{L}(C)+\mathsf{L}(D)$.
\item[(5)] $\mathsf{L}(C+E)=\{2,3\}\supsetneq\{2\}=\mathsf{L}(C)+\mathsf{L}(E)$.
\end{enumerate}
\end{example}

\begin{proof}
Clearly, $B$, $C$ and $E$ are atoms of $\mathcal{H}$, $A=C+\{0,1,2\}=C+E=C+C+C$ and $D=\{0,3\}+\{0,3,6\}=\{0,3\}+\{0,6\}=\{0,3\}+\{0,3\}+\{0,3\}$. In particular, $A$ and $D$ are not relatively cancellative. Moreover, $A+B=C+\{0,1,2,7,9\}$ and $\{0,1,2,7,9\}$ is an atom of $\mathcal{H}$. Also note that $C+D=\{0,3\}+\{0,1,4,6,7\}$ and $\{0,1,4,6,7\}$ is an atom of $\mathcal{H}$. The rest is straightforward.
\end{proof}

We now provide a simple sufficient criterion to obtain relatively cancellative elements. Observe that not all relatively cancellative elements satisfy this criterion. (For instance, we can use Proposition~\ref{Proposition 3.1} to construct a multitude of counterexamples.)

\begin{lemma}\label{Lemma 3.4}
Let $\mathcal{H}=\mathcal{P}_{{\rm fin},0}(\mathbb{N}_0)$ and let $X\in\mathcal{H}$ be such that for all $u,v\in\mathsf{Z}(X)$ with ${\gcd}(u,v)\not=1$, it follows that $u=v$. Then $X$ is relatively cancellative.
\end{lemma}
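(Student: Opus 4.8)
The hypothesis says that any two distinct factorizations of $X$ in $\mathsf{Z}(\mathcal{H})$ are coprime, i.e.\ share no common atom. The plan is to suppose $X=B+C=B+D$ with $B,C,D\in\mathcal{H}$ and derive $C=D$. First I would reduce to the case where $B$, $C$, $D$ are all nonunits: if $B=\{0\}$ the claim is trivial, and if $C=\{0\}$ then $X=B$ forces $D=\{0\}$ as well (since $\mathcal{H}$ is reduced and $B\mid_{\mathcal{H}}B+D$ with $B=B+D$ forces... actually more simply, in $\mathcal{P}_{{\rm fin},0}(\mathbb{N}_0)$ one has $\max(B+D)=\max B+\max D$, so $\max D=0$ hence $D=\{0\}$). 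So assume $B,C,D\notin\mathcal{H}^\times=\{\{0\}\}$. Fix a factorization $z_B\in\mathsf{Z}(B)$, $z_C\in\mathsf{Z}(C)$, $z_D\in\mathsf{Z}(D)$; then $u:=z_Bz_C$ and $v:=z_Bz_D$ both lie in $\mathsf{Z}(X)$. Since $B$ is a nonunit, $z_B$ is a nontrivial product of atoms, so $\gcd(u,v)\ne 1$. By hypothesis $u=v$ in the free abelian monoid $\mathsf{Z}(\mathcal{H})$, and since $\mathsf{Z}(\mathcal{H})$ is cancellative we get $z_C=z_D$, hence $C=D$ (two elements of $\mathcal{H}$ with a common factorization are equal, because the factorization determines the sum).

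The only genuine gap in that sketch is the step asserting $\gcd(u,v)\neq 1$: I chose $u=z_Bz_C$ and $v=z_Bz_D$, and I need the common factor $z_B$ to be a nonunit of $\mathsf{Z}(\mathcal{H})$, i.e.\ $|z_B|\geq 1$. This is exactly where $B\neq\{0\}$ is used: since $\mathcal{H}$ is atomic and $B$ is a nonunit, $B$ is a sum of at least one atom, so $z_B$ contains at least one atom as a factor, whence $\gcd(u,v)$ is divisible by that atom and in particular $\neq 1$. So the reduction to $B\neq\{0\}$ at the very start is not cosmetic — it is what makes the coprimality hypothesis applicable.

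I expect the main (very minor) obstacle to be bookkeeping the degenerate cases cleanly rather than any real difficulty: one must handle $B=\{0\}$ and $C=\{0\}$ (equivalently $D=\{0\}$) separately, and in the latter case argue $X=B$ and then $D=\{0\}$ using that $\max$ is additive on $\mathcal{P}_{{\rm fin},0}(\mathbb{N}_0)$ (or, alternatively, that $\mathsf{L}(X)$ contains $0$ only when $X=\{0\}$, plus cancellativity of $\mathsf{Z}(\mathcal{H})$). Everything else is immediate from the stated facts that $\mathsf{Z}(\mathcal{H})$ is a cancellative free abelian monoid and that a factorization of an element determines the element. No induction, no case analysis on the structure of the sets, and no appeal to Proposition~\ref{Proposition 3.1} is needed.
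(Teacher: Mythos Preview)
Your proposal is correct and follows essentially the same approach as the paper: pick factorizations $z_B,z_C,z_D$ of the three summands, form $u=z_Bz_C$ and $v=z_Bz_D$ in $\mathsf{Z}(X)$, and apply the hypothesis. The paper's write-up is slightly leaner in that it does not peel off degenerate cases in advance; instead it simply splits on whether $\gcd(uv,uw)=1$ or not, noting that $\gcd=1$ forces $u=1$ (hence the common summand is $\{0\}$ and the other two equal $X$), so your separate treatment of $C=\{0\}$ via additivity of $\max$ is correct but unnecessary.
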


\begin{proof}
Let $A,B,C\in\mathcal{H}$ be such that $A+B=A+C=X$. Clearly, there are some $u\in\mathsf{Z}(A)$, $v\in\mathsf{Z}(B)$ and $w\in\mathsf{Z}(C)$. Since $A+B=A+C=X$, it follows that $uv,uw\in\mathsf{Z}(X)$. If ${\rm gcd}(uv,uw)=1$, then $u=1$, and hence $A=\{0\}$ and $B=C$. Now let ${\rm gcd}(uv,uw)\not=1$. Then $uv=uw$, and thus $v=w$. Therefore, $B=C$.
\end{proof}

The next step is to show that each set of the form $\{2,n\}$ with $n\in\mathbb{N}_{\geq 3}$ is indeed the length set of some relatively cancellative element of $\mathcal{P}_{{\rm fin},0}(\mathbb{N}_0)$. (Note that the length sets constructed in \cite[Proposition 4.10]{FaTr18} are of the form $\{2,n\}$, but they do not stem from relatively cancellative elements.) For each set $A$, we let $|A|$ denote the cardinality of $A$ in the proof of Theorem~\ref{Theorem 3.5} below. (It is not to be confused with $|\cdot|$ defined on the factorization monoid.)

\begin{theorem}\label{Theorem 3.5}
Let $\mathcal{H}=\mathcal{P}_{{\rm fin},0}(\mathbb{N}_0)$ and let $(n_j)_{j=0}^{\infty}$, $(A_j)_{j=0}^{\infty}$, $(B_j)_{j=0}^{\infty}$, $(C_j)_{j=0}^{\infty}$, $(D_j)_{j=0}^{\infty}$ and $(S_j)_{j=0}^{\infty}$ be defined recursively by $n_0=0$, $A_0=\{0,1\}$, $B_0=\{0\}$, $C_0=\{0\}$, $D_0=\{0,1\}$, $S_0=\{0,1\}$ and for all $i\in\mathbb{N}_0$ by $n_{i+1}\in\mathbb{N}$ with $n_{i+1}\geq 3\max(D_i)$, $A_{i+1}=A_i\cup\{1+n_{i+1}\}$, $B_{i+1}=B_i\cup (n_{i+1}+D_i)$, $C_{i+1}=C_i\cup\{n_{i+1}\}$, $D_{i+1}=D_i+\{0,1+n_{i+1}\}$ and $S_{i+1}=C_{i+1}+D_{i+1}$.
\begin{enumerate}
\item[(1)] For each $i\in\mathbb{N}$, $A_i,B_i,C_i\in\mathcal{A}(\mathcal{H})$, $D_i\in\mathcal{H}$, $\mathsf{Z}(D_i)=\{\prod_{j=0}^i\{0,1+n_j\}\}$, $\mathsf{L}(D_i)=\{i+1\}$, $\max(A_i\cup B_i\cup C_i)<\max(D_i)$, $A_i=\{0\}\cup (1+C_i)$ and $A_i+B_i=C_i+D_i$.
\item[(2)] For each $i\in\mathbb{N}$, $\mathsf{Z}(S_i)=\{A_iB_i,C_i\prod_{j=0}^i\{0,1+n_j\}\}$, $\mathsf{L}(S_i)=\{2,i+2\}$ and for all $u,v\in\mathsf{Z}(S_i)$ with ${\rm gcd}(u,v)\not=1$, it follows that $u=v$.
\end{enumerate}
\end{theorem}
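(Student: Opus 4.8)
The plan is to prove (1) and (2) simultaneously by induction on $i\ge 1$. The base case $i=1$ is a finite verification: $A_1,B_1,C_1,D_1$ and $S_1=C_1+D_1$ are given explicitly in terms of $n_1$, and the only mildly laborious point is that $S_1$ has exactly the two factorizations $A_1B_1$ and $C_1\{0,1\}\{0,1+n_1\}$, which is a bounded case check. Assume now (1) and (2) at level $i$. Of the assertions in (1) at level $i+1$, three are ``structural'' and come quickly. The identity $A_{i+1}=\{0\}\cup(1+C_{i+1})$ is immediate from $A_{i+1}=A_i\cup\{1+n_{i+1}\}$, $C_{i+1}=C_i\cup\{n_{i+1}\}$ and $A_i=\{0\}\cup(1+C_i)$. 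Using $n_{i+1}\ge 3\max(D_i)>\max(D_i)>\max(A_i\cup B_i\cup C_i)$ one identifies the three maxima and reads off $\max(A_{i+1}\cup B_{i+1}\cup C_{i+1})=n_{i+1}+\max(D_i)<1+n_{i+1}+\max(D_i)=\max(D_{i+1})$. And expanding both sides of $A_{i+1}+B_{i+1}=C_{i+1}+D_{i+1}$ with the recursions, then using the inductive identity $A_i+B_i=C_i+D_i$, the formula $A_i+D_i=D_i\cup(1+C_i+D_i)$, and the inclusion $B_i\subseteq A_i+B_i$, both sides collapse to $S_i\cup(n_{i+1}+D_i)\cup(1+n_{i+1}+S_i)\cup(1+2n_{i+1}+D_i)$ with $S_i=C_i+D_i$; this explicit shape of $S_{i+1}$ is used below.

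\smallskip
For $\mathsf{Z}(D_{i+1})$ I would invoke Proposition~\ref{Proposition 3.1} rather than argue directly: by induction $\mathsf{Z}(D_i)$ is a singleton, so $D_i$ is relatively cancellative (Lemma~\ref{Lemma 3.4}, applied with vacuous hypothesis), the set $\{0,1+n_{i+1}\}$ is an atom and hence relatively cancellative, and $\gcd(\{0,1+n_{i+1}\})=1+n_{i+1}>2\max(D_i)$. Proposition~\ref{Proposition 3.1}(3) then gives $\mathsf{Z}(D_{i+1})=\mathsf{Z}(D_i)\mathsf{Z}(\{0,1+n_{i+1}\})=\big\{\prod_{j=0}^{i+1}\{0,1+n_j\}\big\}$, that $D_{i+1}$ is relatively cancellative, and $\mathsf{L}(D_{i+1})=\{i+2\}$.

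\smallskip
For the atoms $A_{i+1},C_{i+1}$ the growth condition does the work: from $\max(D_i)\ge 2+n_i$ one gets $n_{i+1}>2+3n_i$, so the largest element of $A_{i+1}$ (resp.\ $C_{i+1}$) is separated from its second-largest element by more than twice the latter; writing that second-largest element as $x'+y'$ in a hypothetical decomposition $A_{i+1}=X+Y$ and comparing with $\max X+\max Y=\max(A_{i+1})$ forces one of $\max X,\max Y$, both of which lie in $A_{i+1}$, to equal $\max(A_{i+1})$ and the other to be $0$. For $B_{i+1}=B_i\cup(n_{i+1}+D_i)$ one uses its two well-separated blocks together with the inductive hypotheses: in $B_{i+1}=X+Y$ a comparison of maxima shows exactly one factor, say $Y$, has maximum $\ge n_{i+1}$, whence $X\subseteq B_i$ with $\max X<\max(D_i)$; pushing the top block through the equation yields $D_i=X+D'$ with $D'=\{d\in D_i:\ n_{i+1}+d\in Y\}\in\mathcal{H}$, so since $\mathsf{Z}(D_i)$ is a singleton $X=\sum_{j\in J}\{0,1+n_j\}$ is a subproduct of $\prod_{j=0}^i\{0,1+n_j\}$; here $0\notin J$ (else $1\in X\subseteq B_i$, impossible as every nonzero element of $B_i$ is $\ge 3$), and substituting $D_i=X+D'$ back into $B_{i+1}=X+Y$ gives $B_i=X+(Y\cap B_i)$, which contradicts $B_i\in\mathcal{A}(\mathcal{H})$ unless $X=\{0\}$, the bound $|B_i|\ge 3$ ruling out $X=B_i$.

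\smallskip
The heart of the proof --- and the step I expect to be the main obstacle --- is $\mathsf{Z}(S_{i+1})$. Because $n_{i+1}\gg\max(D_i)$, the explicit formula above displays $S_{i+1}$ as three well-separated ``islands'': a bottom copy of $S_i$, a middle block $(n_{i+1}+D_i)\cup(1+n_{i+1}+S_i)$, and a top copy of $D_i$ located at $1+2n_{i+1}+D_i$. Given a factorization $S_{i+1}=\prod_k E_k$ into atoms, a bound on $\sum_k\max(E_k)=\max(S_{i+1})$ restricts how many of the $E_k$ have their maximum in the middle island (at most two) or the top island (at most one, never simultaneously with a middle one), leaving only a few configurations. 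In each, the $E_k$ contained in the bottom island multiply to some $G\in\mathcal{H}$; projecting $S_{i+1}=\prod_k E_k$ onto the top island --- exactly as the relation $D_i=X+D'$ arose for $B_{i+1}$ --- gives $D_i=(\text{shifted high part})+G$, so $G$ divides the unique factorization of $D_i$ and hence $G=\sum_{j\in J}\{0,1+n_j\}$ for some $J\subseteq\{0,\dots,i\}$, the bottom-island atoms being exactly the $\{0,1+n_j\}$, $j\in J$. Projecting onto the bottom island gives $S_i=(\text{low part})+G$, so $G$ divides a factorization of $S_i$; feeding this into the inductive $\mathsf{Z}(S_i)=\{A_iB_i,\,C_i\prod_{j=0}^i\{0,1+n_j\}\}$ --- using $|A_i|,|B_i|\ge 3$, $|C_i|\ge 2$ and $|\{0,1+n_j\}|=2$, so that $\{0,1+n_j\}$ equals none of $A_i,B_i,C_i$ (for $i=1$ the equality $C_1=\{0,1+n_j\}$ is ruled out by $n_1\ge 3$) --- pins down the low part, hence (via the top-island projection) the high part, and a last look at the middle island and the leftover atom(s) leaves only the two announced factorizations. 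Once $\mathsf{Z}(S_{i+1})=\big\{A_{i+1}B_{i+1},\ C_{i+1}\prod_{j=0}^{i+1}\{0,1+n_j\}\big\}$ is in hand, $\mathsf{L}(S_{i+1})=\{2,(i+1)+2\}$ follows from $|A_{i+1}B_{i+1}|=2$ and $\big|C_{i+1}\prod_{j=0}^{i+1}\{0,1+n_j\}\big|=i+3$, and the $\gcd$-statement follows by a cardinality count: $A_{i+1}$ and $B_{i+1}$ have more than two elements, and $|C_{i+1}|=i+2$ differs from $|A_{i+1}|=i+3$ and from $|B_{i+1}|=2^{i+2}-1$, so no atom of $A_{i+1}B_{i+1}$ equals an atom of $C_{i+1}\prod_{j=0}^{i+1}\{0,1+n_j\}$. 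The genuine difficulty in the $S_{i+1}$ step is to control the interaction between the three islands and to exclude spurious factorizations --- in particular ones in which a single atom straddles two islands --- and this is precisely where the bound $n_{i+1}\ge 3\max(D_i)$ is used again and again.
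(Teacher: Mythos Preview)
Your treatment of part~(1) is essentially the paper's: the structural identities, the atom arguments for $A_{i+1},C_{i+1}$, and the appeal to Proposition~3.1/Lemma~3.4 for $\mathsf{Z}(D_{i+1})$ all match. Your variant for $B_{i+1}\in\mathcal{A}(\mathcal{H})$ (identifying $X$ as a subproduct of $\prod_j\{0,1+n_j\}$ and ruling out $X=B_i$ by cardinality) is slightly different from the paper's (which derives $n_1\in D_i$ from $Y\cap B_i=\{0\}$), but it works.

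For part~(2) your route genuinely diverges from the paper's, and the divergence hides a gap. The paper does \emph{not} classify atoms by the island containing their maximum. Instead it observes that exactly one atom $Y$ in any factorization of $S_{i+1}$ contains $n_1$, proves by a short induction that $C_{i+1}\subseteq Y$ and that the sum $X$ of the remaining atoms satisfies $X\subseteq D_{i+1}$, and then restricts to $[0,n_{i+1}-1]$ to obtain $S_i=X'+Y'$ with $C_i\subseteq Y'$; the inductive $\mathsf{Z}(S_i)$ then branches into two cases on the shape of $(X',Y')$, each handled by a sequence of combinatorial claims. This reduces everything to a single two-factor decomposition with strong structural constraints before any case split.

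Your plan instead splits on how many $E_k$ have $\max E_k\geq n_{i+1}$. Note first that \emph{both} actual factorizations $A_{i+1}B_{i+1}$ and $C_{i+1}\prod_j\{0,1+n_j\}$ fall in your configuration~(b) (two atoms with maximum in the middle island), not~(a). In configuration~(b) your ``top-island projection gives $D_i=(\text{shifted high part})+G$'' does not follow from the $B_{i+1}$ analogy. Writing $F=E_1+E_2$, a top-island element can arise as $f+g$ where $f=e_1+e_2$ has one summand in the bottom part of its atom and one in the middle, so $f$ need not lie in $F\cap[1+2n_{i+1},\infty)$. Concretely, with only $n_{i+1}\geq 3\max(D_i)$ one has $\max G\leq\max(D_i)$ but the largest element of $F$ below $1+2n_{i+1}$ can be as big as $1+n_{i+1}+2\max(S_i)$; ruling out $f+g\geq 1+2n_{i+1}$ with such $f$ would need roughly $n_{i+1}>2\max(S_i)+\max(D_i)$, i.e.\ about $n_{i+1}>5\max(D_i)$, which is not assumed. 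The bottom-island projection \emph{does} give $S_i=F_{\mathrm{bot}}+G$ cleanly, but without the top projection you cannot first conclude $G\mid_{\mathcal H}D_i$, so your chain ``$G\mid D_i\Rightarrow$ atoms of $G$ are the $\{0,1+n_j\}$ $\Rightarrow$ feed into $\mathsf{Z}(S_i)$'' breaks at the first link. The remaining reconstruction of $E_1,E_2$ from the middle island, and the exclusion of atoms straddling two islands, is then a substantially longer case analysis than your last paragraph suggests --- comparable in length to the paper's Claims~1--9 and Cases~1--2, but without the organizing principle $C_{i+1}\subseteq Y$ that makes the paper's argument tractable.
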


\begin{proof}
(1) We prove the statement by induction on $i$. Observe that $A_1,B_1,C_1\in\mathcal{A}(\mathcal{H})$, $D_1\in\mathcal{H}$, $\mathsf{Z}(D_1)=\{\{0,1\}\{0,1+n_1\}\}$, $\mathsf{L}(D_1)=\{2\}$, $\max(A_1\cup B_1\cup C_1)=1+n_1<2+n_1=\max(D_1)$, $A_1=\{0\}\cup (1+C_1)$ and $A_1+B_1=\{0,1,n_1,1+n_1,2+n_1,1+2n_1,2+2n_1\}=C_1+D_1$.

Now let $i\in\mathbb{N}$ and let the statement be true for $i$. Clearly, $A_{i+1},B_{i+1},C_{i+1},D_{i+1}\in\mathcal{H}$. Since $\mathsf{Z}(D_i)=\{\prod_{j=0}^i\{0,1+n_j\}\}$ and $1+n_{i+1}>2\max(D_i)$, it follows from Proposition~\ref{Proposition 3.1} and Lemma~\ref{Lemma 3.4} that $\mathsf{Z}(D_{i+1})=\{\prod_{j=0}^{i+1}\{0,1+n_j\}\}$ and $\mathsf{L}(D_{i+1})=\{i+2\}$. Since $\max(A_i\cup B_i\cup C_i)<\max(D_i)<n_{i+1}$, we have that $\max(A_{i+1})=1+n_{i+1}$, $\max(B_{i+1})=n_{i+1}+\max(D_i)$, $\max(C_{i+1})=n_{i+1}$ and $\max(D_{i+1})=\max(D_i)+1+n_{i+1}$, and hence $\max(A_{i+1}\cup B_{i+1}\cup C_{i+1})<\max(D_{i+1})$. Moreover,

\begin{align*}
&2\max(A_{i+1}\setminus\{\max(A_{i+1})\})=2\max(A_i)<1+n_{i+1}=\max(A_{i+1})\textnormal{ and}\\ &2\max(C_{i+1}\setminus\{\max(C_{i+1})\})=2\max(C_i)<n_{i+1}=\max(C_{i+1}),
\end{align*}

\medskip
and thus $A_{i+1},C_{i+1}\in\mathcal{A}(\mathcal{H})$. Since $A_i=\{0\}\cup (1+C_i)$, it follows that $A_{i+1}=\{0,1+n_{i+1}\}\cup (1+C_i)=\{0\}\cup (1+C_{i+1})$. Also note that $B_i\subseteq A_i+B_i=C_i+D_i$. We infer that

\begin{align*}
A_{i+1}+B_{i+1} &=(A_i+B_i)\cup (1+n_{i+1}+B_i)\cup (n_{i+1}+A_i+D_i)\cup (1+2n_{i+1}+D_i)\\ &=(C_i+D_i)\cup (1+n_{i+1}+B_i)\cup (n_{i+1}+D_i)\cup (1+n_{i+1}+C_i+D_i)\cup (1+2n_{i+1}+D_i)\\ &=(C_i+D_i)\cup (n_{i+1}+D_i)\cup (1+n_{i+1}+C_i+D_i)\cup (1+2n_{i+1}+D_i)\\ &=(C_i\cup\{n_{i+1}\})+(D_i\cup (1+n_{i+1}+D_i))=C_{i+1}+D_{i+1}.
\end{align*}

\smallskip
It remains to prove that $B_{i+1}\in\mathcal{A}(\mathcal{H})$. Let $X,Y\in\mathcal{H}$ be such that $B_{i+1}=X+Y$ and $\max(X)\leq\max(Y)$. We need to show that $X=\{0\}$. Since $2n_{i+1}>n_{i+1}+\max(D_i)=\max(X)+\max(Y)\geq 2\max(X)$ and $B_{i+1}\subseteq [0,\max(D_i)]\cup [n_{i+1},n_{i+1}+\max(D_i)]$, we have that $\max(X)\leq\max(D_i)<n_{i+1}$. This implies that $X\subseteq B_i$ (since $X\subseteq B_{i+1}=B_i\cup (n_{i+1}+D_i)$). Note that $n_{i+1}\in B_{i+1}=X+Y$, and hence there are some $x\in X$ and $y\in Y$ such that $n_{i+1}=x+y$. If $y\leq\max(B_i)$, then $n_{i+1}\leq 2\max(D_i)$, a contradiction. Therefore, $y>\max(B_i)$, and thus $y\in B_{i+1}\setminus B_i$. Consequently, there is some $z\in D_i$ such that $y=n_{i+1}+z$. We infer that $x=0$ and $n_{i+1}\in Y$. Set $Z=\{a\in D_i\mid n_{i+1}+a\in Y\}$. Then $Z\in\mathcal{H}$. It follows that $Y=Y\cap B_{i+1}=(Y\cap B_i)\cup (Y\cap (n_{i+1}+D_i))=(Y\cap B_i)\cup (n_{i+1}+Z)$, and hence $B_i\cup (n_{i+1}+D_i)=X+Y=(X+(Y\cap B_i))\cup (n_{i+1}+X+Z)$. Since $\max(B_i)<n_{i+1}$ and $\max(X+(Y\cap B_i))<n_{i+1}$, this implies that $B_i=X+(Y\cap B_i)$ and $n_{i+1}+D_i=n_{i+1}+X+Z$. If $Y\cap B_i=\{0\}$, then $n_1\in B_i\subseteq B_i+Z=X+Z=D_i$, a contradiction. Therefore, $Y\cap B_i\not=\{0\}$. Since $B_i\in\mathcal{A}(\mathcal{H})$, we have that $X=\{0\}$.

\medskip
(2) We prove by induction on $i$ that for each $i\in\mathbb{N}$, $\mathsf{Z}(S_i)=\{A_iB_i,C_i\prod_{j=0}^i\{0,1+n_j\}\}$. First we show that $\mathsf{Z}(S_1)=\{A_1B_1,C_1\{0,1\}\{0,1+n_1\}\}$. By (1) we have that $\{A_1B_1,C_1\{0,1\}\{0,1+n_1\}\}\subseteq\mathsf{Z}(S_1)$. Also note that $S_1=\{0,1,n_1,1+n_1,2+n_1,1+2n_1,2+2n_1\}$. Let $z\in\mathsf{Z}(S_1)$. There are some $t\in\mathbb{N}_{\geq 2}$ and atoms $(X_i)_{i=1}^t$ of $\mathcal{H}$ such that $z=\prod_{i=1}^t X_i$ and $\max(X_j)\leq\max(X_{j+1})$ for each $j\in [1,t-1]$. First let $t\geq 3$. Since $2,2n_1\not\in S_1$ and $\sum_{i=1}^t\max(X_i)=2+2n_1$, we infer that $t=3$, $\max(X_1)=1$, $\max(X_2)=n_1$ and $\max(X_3)=1+n_1$. This implies that $z=\{0,1\}\{0,n_1\}\{0,1+n_1\}=C_1\{0,1\}\{0,1+n_1\}$. Let $t=2$. Then $(\max(X_1),\max(X_2))\in\{(1,1+2n_1),(n_1,2+n_1),(1+n_1,1+n_1)\}$. If $(\max(X_1),\max(X_2))=(1,1+2n_1)$, then $X_1=\{0,1\}$ and $X_2=\{0,n_1,1+n_1,1+2n_1\}=\{0,n_1\}+\{0,1+n_1\}\not\in\mathcal{A}(\mathcal{H})$, a contradiction. If $(\max(X_1),\max(X_2))=(n_1,2+n_1)$, then $X_1=\{0,n_1\}$ (since $3+n_1\not\in S_1$) and $X_2=\{0,1,1+n_1,2+n_1\}=\{0,1\}+\{0,1+n_1\}\not\in\mathcal{A}(\mathcal{H})$, a contradiction. Therefore, $\max(X_1)=\max(X_2)=1+n_1$. It is obvious that $|\{u\in\{1,2\}\mid 1\in X_u\}|=|\{u\in\{1,2\}\mid n_1\in X_u\}|=1$. If there is some $u\in\{1,2\}$ such that $\{1,n_1\}\subseteq X_u$, then $X_u=\{0,1,n_1,1+n_1\}=\{0,1\}+\{0,n_1\}\not\in\mathcal{A}(\mathcal{H})$, a contradiction. We infer that $z=X_1X_2=\{0,1,1+n_1\}\{0,n_1,1+n_1\}=A_1B_1$.

\smallskip
Let $i\in\mathbb{N}$ be such that $\mathsf{Z}(S_i)=\{A_iB_i,C_i\prod_{j=0}^i\{0,1+n_j\}\}$. Then $\{A_{i+1}B_{i+1},C_{i+1}\prod_{j=0}^{i+1}\{0,1+n_j\}\}\subseteq\mathsf{Z}(S_{i+1})$ by (1). Let $z\in\mathsf{Z}(S_{i+1})$. Then there are some $t\in\mathbb{N}$ and atoms $(Y_j)_{j=1}^{t+1}$ of $\mathcal{H}$ such that $z=\prod_{j=1}^{t+1} Y_j$. For each $j\in [0,i+1]$ set $m_j=1+n_j$ and for each $E\subseteq [0,i+1]$ set $\sum_E=\sum_{k\in E} m_k$. Then $A_{i+1}=\{0\}\cup\{m_j\mid j\in [0,i+1]\}$, $B_{i+1}=\{\sum_E-1\mid\emptyset\not=E\subseteq [0,i+1]\}$, $C_{i+1}=\{n_j\mid j\in [0,i+1]\}$, $D_{i+1}=\{\sum_E\mid E\subseteq [0,i+1]\}$ and $S_{i+1}=\{\sum_E-1,m_r+\sum_E-1\mid E\subseteq [0,i+1],r\in E\}$. Clearly, there is precisely one $\ell\in [1,t+1]$ such that $n_1\in Y_{\ell}$ (since $[1,n_1]\cap S_{i+1}=\{1,n_1\}$ and $2n_1\not\in S_{i+1}$). Without restriction, let $n_1\in Y_{t+1}$. Set $X=\sum_{j=1}^t Y_j$ and set $Y=Y_{t+1}$. Then $S_{i+1}=X+Y$.

\medskip
Claim 1: If $(\lambda_j)_{j=0}^{i+1},(\mu_j)_{j=0}^{i+1}\in [0,3]^{[0,i+1]}$ are such that $\sum_{j=0}^{i+1}\lambda_jm_j=\sum_{j=0}^{i+1}\mu_jm_j$, then $\lambda_j=\mu_j$ for each $j\in [0,i+1]$. Let $(\lambda_j)_{j=0}^{i+1},(\mu_j)_{j=0}^{i+1}\in [0,3]^{[0,i+1]}$ be such that $\sum_{j=0}^{i+1}\lambda_jm_j=\sum_{j=0}^{i+1}\mu_jm_j$. Assume that the statement is not true. Then there is a maximal $k\in [1,i+1]$ such that $\lambda_k\not=\mu_k$. Without restriction, let $\lambda_k<\mu_k$. It follows that $m_k\leq\sum_{j=0}^{k-1}\mu_jm_j+(\mu_k-\lambda_k)m_k=\sum_{j=0}^{k-1}\lambda_jm_j\leq 3\sum_{j=0}^{k-1} m_j=3\max(D_{k-1})<m_k$, a contradiction. Therefore, $\lambda_j=\mu_j$ for each $j\in [0,i+1]$.\qed(Claim 1)

\medskip
Claim 2: For all $r,s\in [1,i+1]$ and $E\subseteq [0,i+1]$ with $n_r+n_s+\sum_E\in S_{i+1}$, it follows that $0\in E$ and $\{r,s\}\nsubseteq E$. Let $r,s\in [1,i+1]$ and $E\subseteq [0,i+1]$ be such that $n_r+n_s+\sum_E\in S_{i+1}$. Then $m_r+m_s+\sum_E=m_0+m_{\ell}+\sum_F$ for some $\ell\in [0,i+1]$ and $F\subseteq [0,i+1]$. Now it follows by Claim 1 that $0\in E$ and $\{r,s\}\nsubseteq E$.\qed(Claim 2)

\medskip
Claim 3: $C_{i+1}\subseteq Y$. It suffices to prove by induction that $n_j\in Y$ for each $j\in [0,i+1]$. Clearly, $n_0,n_1\in Y$. Now let $j\in [1,i]$ be such that $n_j\in Y$. Since $n_{j+1}\in S_{i+1}=X+Y$, there are some $x\in X$ and $y\in Y$ with $n_{j+1}=x+y$. Consequently, there are some $r,s\in [0,i+1]$ and $E,F\subseteq [0,i+1]$ such that $x=n_r+\sum_E$ and $y=n_s+\sum_F$. If $r\geq j+1$, then $x=n_{j+1}$ and $n_{j+1}+n_j\in X+Y=S_{i+1}$, which contradicts Claim 2. Therefore, $r\leq j$. Clearly, $\max(E)\leq j$ and $\max(F)\leq j$.

Assume that $s\leq j$. If $r<j$, then $n_{j+1}=x+y=n_r+n_s+\sum_E+\sum_F<3\sum_{[0,j]}=3\max(D_j)\leq n_{j+1}$, a contradiction. We infer that $r=j$. Set $E^{\prime}=(E\setminus\{0\})\cup\{r\}$. Since $2n_r+\sum_E=x+n_j\in X+Y=S_{i+1}$, it follows that $0\in E$ and $r\not\in E$ by Claim 2. Observe that $x=n_r+1+\sum_{E\setminus\{0\}}=\sum_{E^{\prime}}$. Moreover, $n_{j+1}=\sum_{E^{\prime}}+n_s+\sum_F<3\sum_{[0,j]}=3\max(D_j)\leq n_{j+1}$, a contradiction. We have that $s\geq j+1$, and thus $n_{j+1}=y\in Y$.\qed(Claim 3)

\medskip
Claim 4: $X\subseteq D_{i+1}$. Let $x\in X$. Then there are some $r\in [0,i+1]$ and $E\subseteq [0,i+1]$ such that $x=n_r+\sum_E$. If $r=0$, then $x=\sum_E\in D_{i+1}$. Now let $r>0$. It follows that $2n_r+\sum_E=x+n_r\in X+Y=S_{i+1}$ by Claim 3, and hence $0\in E$ and $r\not\in E$ by Claim 2. This implies that $x=n_r+1+\sum_{E\setminus\{0\}}=\sum_{(E\setminus\{0\})\cup\{r\}}\in D_{i+1}$.\qed(Claim 4)

\smallskip
The following four statements are simple consequences of Claims 1 and 4 and the fact that $S_{i+1}=X+Y$.

\begin{itemize}
\item[($\ast_1$)] For each $\emptyset\not=E\subseteq [0,i+1]$, there are some $F,G\subseteq [0,i+1]$ such that $E=F\cup G$, $F\cap G=\emptyset$, $\sum_F\in X$ and $\sum_G-1\in Y$.
\item[($\ast_2$)] For each $E\subseteq [0,i+1]$ and $r\in E$, there are some $F,G\subseteq [0,i+1]$ such that $E=F\cup G$, $F\cap G\subseteq\{r\}$, $r\in G$, $\sum_F\in X$ and $\sum_{\{r\}\setminus F}+\sum_G-1\in Y$.
\item[($\ast_3$)] If $F,G\subseteq [0,i+1]$ are such that $\sum_F+\sum_G-1\in S_{i+1}$, then $|F\cap G|\leq 1$.
\item[($\ast_4$)] If $F,G\subseteq [0,i+1]$ and $r\in G$ are such that $\sum_F+m_r+\sum_G-1\in S_{i+1}$, then $F\cap G=\emptyset$.
\end{itemize}

\smallskip
Set $X^{\prime}=X\cap [0,n_{i+1}-1]$ and $Y^{\prime}=Y\cap [0,n_{i+1}-1]$. Then $X^{\prime},Y^{\prime}\in\mathcal{H}$.

\medskip
Claim 5: $S_i=X^{\prime}+Y^{\prime}$. Note that $X^{\prime}=X\cap [0,n_{i+1}-1]\subseteq D_{i+1}\cap [0,n_{i+1}-1]=D_i$ by Claim 4 and $Y^{\prime}=Y\cap [0,n_{i+1}-1]\subseteq S_{i+1}\cap [0,n_{i+1}-1]=S_i=C_i+D_i$. This implies (together with (1)) that $\max(X^{\prime}+Y^{\prime})=\max(X^{\prime})+\max(Y^{\prime})<3\max(D_i)\leq n_{i+1}$. Consequently, $X^{\prime}+Y^{\prime}\subseteq (X+Y)\cap [0,n_{i+1}-1]=S_{i+1}\cap [0,n_{i+1}-1]=S_i$. Now let $v\in S_i$. Then $v\in S_{i+1}=X+Y$, and hence there are some $x\in X$ and $y\in Y$ such that $v=x+y$. Since $v\in [0,n_{i+1}-1]$, we have that $\{x,y\}\subseteq [0,n_{i+1}-1]$, and thus $x\in X^{\prime}$ and $y\in Y^{\prime}$. Therefore, $v\in X^{\prime}+Y^{\prime}$.\qed(Claim 5)

\medskip
Note that $C_i=C_{i+1}\cap [0,n_{i+1}-1]\subseteq Y^{\prime}$ by Claim 3. It follows by Claim 5 and the induction hypothesis that either $X^{\prime}=A_i$ and $Y^{\prime}=B_i$ or there is some $W\subseteq [0,i]$ such that $X^{\prime}=\{\sum_E\mid E\subseteq W\}$ and $Y^{\prime}=\{\sum_E\mid E\subseteq [0,i]\setminus W\}+C_i$.

\medskip
Case 1: $X^{\prime}=A_i$ and $Y^{\prime}=B_i$. First we show that $Y\subseteq B_{i+1}$. Assume that $Y\nsubseteq B_{i+1}$. Then there are some $J\subseteq [0,i+1]$ and $h\in J$ such that $m_h+\sum_J-1\in Y$. If $h\in [0,i]$, then $m_h\in A_i\subseteq X$, and hence $m_h+m_h+\sum_J-1\in S_{i+1}$, which contradicts ($\ast_4$). This implies that $h=i+1$. It follows by ($\ast_2$) that there are some $F,G\subseteq [0,i+1]$ such that $[0,i+1]=F\cup G$, $F\cap G\subseteq\{i+1\}$, $i+1\in G$, $\sum_F\in X$ and $\sum_{\{i+1\}\setminus F}+\sum_G-1\in Y$. Since $\sum_F+m_{i+1}+\sum_J-1\in S_{i+1}$, we infer by ($\ast_4$) that $i+1\not\in F$. It follows that $m_{i+1}+\sum_G-1\in Y$. If $G\not=\{i+1\}$, then there is some $a\in G\cap [0,i]$, and thus $m_a\in A_i\subseteq X$ and $m_a+m_{i+1}+\sum_G-1\in S_{i+1}$, which contradicts ($\ast_4$). Therefore, $G=\{i+1\}$ and $F=[0,i]$. Consequently, $\sum_{[0,i]}\in X\cap [0,n_{i+1}-1]=X^{\prime}=A_i$, which contradicts Claim 1. We conclude that $Y\subseteq B_{i+1}$.

Next we show that $X=A_{i+1}$. Since $2m_{i+1}-1>n_{i+1}+\max(D_i)=\max(B_{i+1})$, we have that $2m_{i+1}-1\not\in B_{i+1}$. It follows that $2m_{i+1}-1\not\in Y$, and hence $m_{i+1}\in X$ by ($\ast_2$) (with $E=\{i+1\}$ and $r=i+1$). Therefore, $A_{i+1}=A_i\cup\{m_{i+1}\}=X^{\prime}\cup\{m_{i+1}\}\subseteq X$. Now let $x\in X$. It follows from Claim 4 that there is some $E\subseteq [0,i+1]$ such that $x=\sum_E$. Since $\sum_{[0,i]}-1\in B_i=Y^{\prime}\subseteq Y$, we have that $\sum_E+\sum_{[0,i]}-1\in S_{i+1}$. It follows by ($\ast_3$) that $|E\cap [0,i]|\leq 1$. Consequently, there is some $j\in [0,i]$ such that $E\subseteq\{j,i+1\}$. Assume that $E=\{j,i+1\}$. There is some $j^{\prime}\in [0,i]\setminus\{j\}$. By ($\ast_2$) there are some $F,\overline{F},G,\overline{G}\subseteq [0,i+1]$ such that $\{j,j^{\prime},i+1\}=F\cup G=\overline{F}\cup\overline{G}$, $F\cap G\subseteq\{i+1\}$, $\overline{F}\cap\overline{G}\subseteq\{j^{\prime}\}$, $i+1\in G$, $j^{\prime}\in\overline{G}$, $\sum_F\in X$, $\sum_{\overline{F}}\in X$, $\sum_{\{i+1\}\setminus F}+\sum_G-1\in Y$ and $\sum_{\{j^{\prime}\}\setminus\overline{F}}+\sum_{\overline{G}}-1\in Y$. Since $Y\subseteq B_{i+1}$, we infer by Claim 1 that $\sum_G-1\in Y$, $\sum_{\overline{G}}-1\in Y$, $F\cap G=\{i+1\}$ and $\overline{F}\cap\overline{G}=\{j^{\prime}\}$. Note that

\[
\{\sum_E+\sum_G-1,\sum_F+\sum_{[0,i]}-1,\sum_{\overline{F}}+\sum_G-1,\sum_F+\sum_{\overline{G}}-1,\sum_{\overline{F}}+\sum_{[0,i]}-1\}\subseteq S_{i+1}.
\]

It follows by ($\ast_3$) that $|E\cap G|\leq 1$, $|F\cap [0,i]|\leq 1$, $|\overline{F}\cap G|\leq 1$, $|F\cap\overline{G}|\leq 1$ and $|\overline{F}\cap [0,i]|\leq 1$. Since $|E\cap G|\leq 1$, we have that $j\not\in G$, and hence $j\in F$. Because $|F\cap [0,i]|\leq 1$, we infer that $j^{\prime}\not\in F$, and thus $j^{\prime}\in G$. Since $|\overline{F}\cap G|\leq 1$, this implies that $i+1\not\in\overline{F}$, and so $i+1\in\overline{G}$. Because $|F\cap\overline{G}|\leq 1$ and $j\in F$, we conclude that $j\not\in\overline{G}$. Consequently, $\{j,j^{\prime}\}\subseteq\overline{F}\cap [0,i]$, which contradicts $|\overline{F}\cap [0,i]|\leq 1$. Therefore, $E\subsetneq\{j,i+1\}$ and $x\in\{0,m_j,m_{i+1}\}\subseteq A_{i+1}$.

Finally, we prove that $B_{i+1}\subseteq Y$. Let $y\in B_{i+1}$. Then there are some $r\in T\subseteq [0,i+1]$ with $y=\sum_T-1$. We infer by ($\ast_2$) that there are some $F,G\subseteq [0,i+1]$ such that $T=F\cup G$, $F\cap G\subseteq\{r\}$, $r\in G$, $\sum_F\in X$ and $\sum_{\{r\}\setminus F}+\sum_G-1\in Y$. Since $Y\subseteq B_{i+1}$, we have that $r\in F$ and $\sum_G-1\in Y$ by Claim 1. It follows from $\sum_F\in X=A_{i+1}$ and Claim 1 that $F=\{r\}$, and thus $G=T$. Consequently, $y=\sum_T-1\in Y$.

Therefore, $Y=B_{i+1}$, and hence $z=A_{i+1}B_{i+1}$.

\medskip
Case 2: $X^{\prime}=\{\sum_E\mid E\subseteq W\}$ and $Y^{\prime}=\{\sum_E\mid E\subseteq [0,i]\setminus W\}+C_i$ for some $W\subseteq [0,i]$. First we show that $X^{\prime}\subsetneq X$. Suppose that $X=X^{\prime}$. Set $Z^{\prime}=\{\sum_E\mid E\subseteq [0,i+1]\setminus W\}$. Then $Z^{\prime}\in\mathcal{H}$. It is sufficient to show that $Y=Z^{\prime}+C_{i+1}$. (Assume that we have already shown the last statement. Since $Y\in\mathcal{A}(\mathcal{H})$, it follows that $Z^{\prime}=\{0\}$, and hence $W=[0,i+1]$, a contradiction.)

\smallskip
($\subseteq$) Let $y\in Y$. Then there are some $E\subseteq [0,i+1]$ and $r\in E$ such that either $y=\sum_E-1$ or $y=m_r+\sum_E-1$. First let $y=\sum_E-1$. It follows from ($\ast_3$) that $|W\cap E|\leq 1$. If $W\cap E=\emptyset$, then $E\setminus\{r\}\subseteq E\subseteq [0,i+1]\setminus W$, and thus $y=\sum_{E\setminus\{r\}}+n_r\in Z^{\prime}+C_{i+1}$. If $s\in W\cap E$, then $E\setminus\{s\}\subseteq [0,i+1]\setminus W$, and hence $y=\sum_{E\setminus\{s\}}+n_s\in Z^{\prime}+C_{i+1}$. Now let $y=m_r+\sum_E-1$. It follows from ($\ast_4$) that $W\cap E=\emptyset$ and $E\subseteq [0,i+1]\setminus W$. We infer that $y=\sum_E+n_r\in Z^{\prime}+C_{i+1}$.

\smallskip
($\supseteq$) Let $y\in Z^{\prime}+C_{i+1}$. Then there are some $E\subseteq [0,i+1]\setminus W$ and $r\in [0,i+1]$ such that $y=m_r+\sum_E-1$.

\begin{itemize}
\item[Case a:] $r\not\in E\cup W$. By ($\ast_1)$ there are some $F,G\subseteq [0,i+1]$ such that $E\cup\{r\}=F\cup G$, $F\cap G=\emptyset$, $\sum_F\in X$ and $\sum_G-1\in Y$. Note that $F=\emptyset$ by Claim 1 (since $X=X^{\prime}$), and hence $G=E\cup\{r\}$. This implies that $y=m_r+\sum_E-1=\sum_G-1\in Y$.
\item[Case b:] $r\in E\cup W$. By ($\ast_2$) there are some $F,G\subseteq [0,i+1]$ such that $E\cup\{r\}=F\cup G$, $F\cap G\subseteq\{r\}$, $r\in G$, $\sum_F\in X$ and $\sum_{\{r\}\setminus F}+\sum_G-1\in Y$. Since $X=X^{\prime}$, we have that $F\subseteq\{r\}$ by Claim 1, and hence $G=E\cup\{r\}$. Moreover, $r\not\in E$ if and only if $r\in W$ if and only if $m_r\in X$ if and only if $F=\{r\}$ by Claim 1 and ($\ast_4$). Observe that $y=m_r+\sum_E-1=\sum_{\{r\}\setminus F}+\sum_G-1\in Y$.
\end{itemize}

Consequently, $X^{\prime}\subsetneq X$. By Claims 1 and 4 there is some $E^{\prime}\subseteq [0,i+1]$ with $i+1\in E^{\prime}$ and $\sum_{E^{\prime}}\in X$. Set $W^{\prime}=W\cup\{i+1\}$. It remains to show that $Y=\{\sum_E\mid E\subseteq [0,i+1]\setminus W^{\prime}\}+C_{i+1}$ and $X=\{\sum_E\mid E\subseteq W^{\prime}\}$. (Suppose that we have already shown the last statement. Since $Y\in\mathcal{A}(\mathcal{H})$, we infer that $W^{\prime}=[0,i+1]$, $X=D_{i+1}$ and $Y=C_{i+1}$. It follows by (1) that $z=C_{i+1}\prod_{j=0}^{i+1}\{0,m_j\}$ and we are done.)

\medskip
Claim 6: $X\subseteq\{\sum_E\mid E\subseteq W^{\prime}\}$. Let $x\in X$. Then there is some $E\subseteq [0,i+1]$ such that $x=\sum_E$ by Claim 4. Without restriction, we can assume that $W^{\prime}\subsetneq [0,i+1]$. There is some $j\in [0,i+1]\setminus W^{\prime}=[0,i]\setminus W$. Moreover, $m_j+\sum_{[0,i]\setminus W}-1\in Y^{\prime}\subseteq Y$. We conclude that $\sum_E+m_j+\sum_{[0,i]\setminus W}-1\in S_{i+1}$, and thus $E\cap ([0,i+1]\setminus W^{\prime})=E\cap ([0,i]\setminus W)=\emptyset$ by ($\ast_4$). This implies that $E\subseteq W^{\prime}$.\qed(Claim 6)

\medskip
Claim 7: For all $E\subseteq [0,i+1]$ with $i+1\in E$, there are some $F,G\subseteq [0,i+1]$ such that $E=F\cup G$, $F\cap G=\{i+1\}$, $\sum_F\in X$ and $\sum_G-1\in Y$. Let $E\subseteq [0,i+1]$ be such that $i+1\in E$. By ($\ast_2$), there are some $F,G\subseteq [0,i+1]$ with $E=F\cup G$, $F\cap G\subseteq\{i+1\}$, $i+1\in G$, $\sum_F\in X$ and $\sum_{\{i+1\}\setminus F}+\sum_G-1\in Y$. Assume that $i+1\not\in F$. Then $m_{i+1}+\sum_G-1\in Y$, and so $\sum_{E^{\prime}}+m_{i+1}+\sum_G-1\in S_{i+1}$. By ($\ast_4$), we have that $i+1\in E^{\prime}\cap G=\emptyset$, a contradiction. Consequently, $F\cap G=\{i+1\}$ and $\sum_G-1\in Y$.\qed(Claim 7)

\medskip
Claim 8: $\{\sum_E\mid E\subseteq [0,i+1]\setminus W^{\prime}\}+C_{i+1}\subseteq Y$. Since $\{\sum_E\mid E\subseteq [0,i+1]\setminus W^{\prime}\}+C_i=Y^{\prime}\subseteq Y$, it remains to prove that for each $E\subseteq [0,i]\setminus W$, it follows that $m_{i+1}+\sum_E-1\in Y$. Let $E\subseteq [0,i]\setminus W$. By Claim 7, there are some $F,G\subseteq [0,i+1]$ with $E\cup\{i+1\}=F\cup G$, $F\cap G=\{i+1\}$, $\sum_F\in X$ and $\sum_G-1\in Y$. Since $F\subseteq W^{\prime}$ by Claims 1 and 6, we have that $F=\{i+1\}$, $G=E\cup\{i+1\}$ and $m_{i+1}+\sum_E-1=\sum_G-1\in Y$.\qed(Claim 8)

\medskip
Claim 9: For all $J^{\prime}\subseteq J\subseteq [0,i+1]$ with $\sum_J\in X$, it follows that $\sum_{J^{\prime}}\in X$. Let $J^{\prime}\subseteq J\subseteq [0,i+1]$ be such that $\sum_J\in X$. Then $J\subseteq W^{\prime}$ by Claims 1 and 6. If $i+1\not\in J^{\prime}$, then $J^{\prime}\subseteq W$ and $\sum_{J^{\prime}}\in X^{\prime}\subseteq X$. Now let $i+1\in J^{\prime}$. By Claim 7, there are some $\overline{F},\overline{G}\subseteq [0,i+1]$ such that $J^{\prime}=\overline{F}\cup\overline{G}$, $\overline{F}\cap\overline{G}=\{i+1\}$, $\sum_{\overline{F}}\in X$ and $\sum_{\overline{G}}-1\in Y$. Since $\sum_J+\sum_{\overline{G}}-1\in S_{i+1}$, it follows from ($\ast_3$) that $|J\cap\overline{G}|\leq 1$. Therefore, $\overline{G}=\{i+1\}$, $\overline{F}=J^{\prime}$ and $\sum_{J^{\prime}}\in X$.\qed(Claim 9)

\smallskip
By Claim 7, there are some $U,V\subseteq [0,i+1]$ such that $[0,i+1]=U\cup V$, $U\cap V=\{i+1\}$, $\sum_U\in X$ and $\sum_V-1\in Y$. Since $\sum_W\in X$, we have that $\sum_W+\sum_V-1\in S_{i+1}$, and thus $|W\cap V|\leq 1$ by ($\ast_3$). Assume that there is some $j\in W\cap V$. Then $U\subseteq W^{\prime}$ by Claims 1 and 6, and hence $U=W^{\prime}\setminus\{j\}$ and $V=([0,i+1]\setminus W)\cup\{j\}$.

\medskip
Case A: $W\not=\{j\}$. There is some $j^{\prime}\in W\setminus\{j\}$. By ($\ast_2$) there are some $\overline{F},\overline{G}\subseteq [0,i+1]$ such that $\{j,j^{\prime},i+1\}=\overline{F}\cup\overline{G}$, $\overline{F}\cap\overline{G}\subseteq\{j^{\prime}\}$, $j^{\prime}\in\overline{G}$, $\sum_{\overline{F}}\in X$ and $\sum_{\{j^{\prime}\}\setminus\overline{F}}+\sum_{\overline{G}}-1\in Y$. Since $m_{j^{\prime}}\in X^{\prime}\subseteq X$, we have by ($\ast_4$) that $j^{\prime}\in\overline{F}$. Therefore, $\overline{F}\cap\overline{G}=\{j^{\prime}\}$ and $\sum_{\overline{G}}-1\in Y$. Since $\sum_U+\sum_{\overline{G}}-1\in S_{i+1}$ and $\sum_{\overline{F}}+\sum_V-1\in S_{i+1}$, we infer by ($\ast_3$) that $|U\cap\overline{G}|\leq 1$ and $|\overline{F}\cap V|\leq 1$. Note that $U\cap\overline{G}=\{j^{\prime}\}$, and hence $i+1\not\in\overline{G}$. This implies that $i+1\in\overline{F}$. Consequently, $\overline{F}\cap V=\{i+1\}$ and $j\not\in\overline{F}$. Therefore, $j\in\overline{G}$ and $\overline{G}=\{j,j^{\prime}\}\subseteq W$. It follows that $\sum_{\overline{G}}\in X^{\prime}\subseteq X$, and thus $2\sum_{\overline{G}}-1\in S_{i+1}$. By ($\ast_3$), we have that $|\overline{G}|\leq 1$, a contradiction.

\medskip
Case B: $W=\{j\}$. Observe that $U=\{i+1\}$, $V=[0,i+1]$, $m_{i+1}\in X$ and $\sum_{[0,i+1]}-1\in Y$. There is some $j^{\prime}\in [0,i]\setminus\{j\}$. By ($\ast_2$) there are some $\overline{F},\overline{G}\subseteq [0,i+1]$ such that $\{j,j^{\prime},i+1\}=\overline{F}\cup\overline{G}$, $\overline{F}\cap\overline{G}\subseteq\{j^{\prime}\}$, $j^{\prime}\in\overline{G}$, $\sum_{\overline{F}}\in X$ and $\sum_{\{j^{\prime}\}\setminus\overline{F}}+\sum_{\overline{G}}-1\in Y$. Since

\[
\{\sum_{\overline{F}}+\sum_{[0,i+1]}-1,m_j+\sum_{\{j^{\prime}\}\setminus\overline{F}}+\sum_{\overline{G}}-1,m_{i+1}+\sum_{\{j^{\prime}\}\setminus\overline{F}}+\sum_{\overline{G}}-1\}\subseteq S_{i+1},
\]

it follows from ($\ast_3$) that $|\overline{F}|\leq 1$, $|(\{j\}\cup (\{j^{\prime}\}\setminus\overline{F}))\cap\overline{G}|\leq 1$ and $|(\{i+1\}\cup (\{j^{\prime}\}\setminus\overline{F}))\cap\overline{G}|\leq 1$. If $\overline{F}\subseteq\{i+1\}$, then $(\{j\}\cup (\{j^{\prime}\}\setminus\overline{F}))\cap\overline{G}=\{j,j^{\prime}\}$, a contradiction. If $\overline{F}\subseteq\{j\}$, then $(\{i+1\}\cup (\{j^{\prime}\}\setminus\overline{F}))\cap\overline{G}=\{j^{\prime},i+1\}$, a contradiction. Therefore, $\overline{F}=\{j^{\prime}\}$, and hence $m_{j^{\prime}}\in X^{\prime}$ and $j^{\prime}\in W$ by Claim 1, a contradiction.

\smallskip
We infer that $W\cap V=\emptyset$, $U=W^{\prime}$ (by Claims 1 and 6), $V=[0,i+1]\setminus W$ and $\sum_{W^{\prime}}\in X$. By Claims 6 and 9, we obtain that $X=\{\sum_E\mid E\subseteq W^{\prime}\}$. We show that $Y\subseteq\{\sum_E\mid E\subseteq [0,i+1]\setminus W^{\prime}\}+C_{i+1}$. (Then $Y=\{\sum_E\mid E\subseteq [0,i+1]\setminus W^{\prime}\}+C_{i+1}$ by Claim 8 and we are done.) Let $y\in Y$. There are some $T\subseteq [0,i+1]$ and $r\in T$ such that either $y=\sum_T-1$ or $y=m_r+\sum_T-1$. Note that $\sum_{W^{\prime}}+y\in S_{i+1}$. If $y=m_r+\sum_T-1$, then $W^{\prime}\cap T=\emptyset$ by ($\ast_4$) and moreover, $y=\sum_T+n_r$, $T\subseteq [0,i+1]\setminus W^{\prime}$ and $n_r\in C_{i+1}$.

Now let $y=\sum_T-1$. It follows from ($\ast_3$) that $|W^{\prime}\cap T|\leq 1$. If $W^{\prime}\cap T=\emptyset$, then $y=\sum_{T\setminus\{r\}}+n_r$, $T\setminus\{r\}\subseteq [0,i+1]\setminus W^{\prime}$ and $n_r\in C_{i+1}$. If $s\in W^{\prime}\cap T$, then $y=\sum_{T\setminus\{s\}}+n_s$, $T\setminus\{s\}\subseteq [0,i+1]\setminus W^{\prime}$ and $n_s\in C_{i+1}$. This completes the proof of $\mathsf{Z}(S_{i+1})=\{A_{i+1}B_{i+1},C_{i+1}\prod_{j=0}^{i+1}\{0,1+n_j\}\}$.

\smallskip
Finally, let $j\in\mathbb{N}$. It is clear that $\mathsf{L}(S_j)=\{2,j+2\}$. Also note that $|A_j|\geq 3$, $|B_j|\geq 3$, $1\in A_j\setminus C_j$ and $m_1\in B_j\setminus C_j$. Therefore, for all $u,v\in\mathsf{Z}(S_j)$ with ${\rm gcd}(u,v)\not=1$, it follows that $u=v$.
\end{proof}

Next we determine the length set of certain elements that are not necessarily relatively cancellative. This will enable us later to show that $\{3,4,5\}$ is in the system of length sets of $\mathcal{P}_{{\rm fin},0}(\mathbb{N}_0)$.

\begin{proposition}\label{Proposition 3.6}
Let $\mathcal{H}=\mathcal{P}_{{\rm fin},0}(\mathbb{N}_0)$ and let $X\in\mathcal{H}$ and $n\in\mathbb{N}$ be such that $n>2\max(X)$. Set $\mathcal{M}=\{(A,C,D)\in\mathcal{H}^3\mid C$ and $D$ are relatively prime and $A+C=A+D=X\}$. Moreover, set $\mathcal{N}=\{A\in\mathcal{H}\mid (A,C,D)\in\mathcal{M}$ for some $C,D\in\mathcal{H}\}$. Then

\[
\mathsf{Z}(X+\{0,n\})=\bigcup_{(A,C,D)\in\mathcal{M}} (C\cup (n+D))\mathsf{Z}(A)\quad\textnormal{and}\quad\mathsf{L}(X+\{0,n\})=1+\bigcup_{A\in\mathcal{N}}\mathsf{L}(A).
\]
\end{proposition}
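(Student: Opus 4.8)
The plan is to derive both identities from a single structural description of an arbitrary factorization of $W:=X+\{0,n\}=X\cup(n+X)$. The only feature I will exploit is that, since $n>2\max(X)$, the ``low part'' $X\subseteq[0,\max(X)]$ and the ``high part'' $n+X\subseteq[n,n+\max(X)]$ of $W$ are disjoint and separated by a gap of length $>\max(X)$; note also $\{0,n\}\in\mathcal{A}(\mathcal{H})$ and $\max(W)=n+\max(X)$. The heart of the argument is the following dichotomy: if $W=U_1+\dots+U_\ell$ with $U_1,\dots,U_\ell\in\mathcal{A}(\mathcal{H})$, then exactly one index, which after reordering I take to be $\ell$, satisfies $\max(U_\ell)\geq n$. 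Here each $U_i$ divides $W$, so $U_i\subseteq W$, and since $0\in U_j$ for all $j$ the element $\max(U_i)$ lies in $W$. Two indices with $\max\geq n$ are impossible since $\sum_k\max(U_k)\geq 2n>n+\max(X)=\max(W)$; and if there were none, then every $\max(U_i)$, being an element of $W$ below $n$, would lie in $[0,\max(X)]$, and ordering the atoms and setting $P_k:=\sum_{i=1}^k\max(U_i)\in W$, an induction shows $P_k\leq\max(X)$ for all $k$ (if $P_k\leq\max(X)$ then $P_{k+1}\leq2\max(X)<n$, so $P_{k+1}\in W$ forces $P_{k+1}\leq\max(X)$), contradicting $P_\ell=\max(W)$.

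Granting the dichotomy, I obtain the inclusion ``$\subseteq$'' as follows. For $i<\ell$ we have $\max(U_i)\leq\max(X)<n$, hence $U_i\subseteq X$; put $A:=U_1+\dots+U_{\ell-1}$ (so $A=\{0\}$ when $\ell=1$), so that $U_1\cdots U_{\ell-1}\in\mathsf{Z}(A)$. Since $U_\ell\subseteq W$, setting $C:=U_\ell\cap[0,\max(X)]$ and $D:=\{d\ge0\mid n+d\in U_\ell\}$ gives $U_\ell=C\cup(n+D)$ with $C,D\subseteq X$, $0\in C$ and $D\neq\emptyset$ (as $\max(U_\ell)\geq n$). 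From $W=A+U_\ell=(A+C)\cup(n+A+D)$: the elements of $n+A+D\subseteq W$ are $\geq n$, so $n+A+D\subseteq n+X$, i.e.\ $A+D\subseteq X$, which forces $\max(A)\leq\max(X)$ and hence $A+C\subseteq[0,2\max(X)]\subseteq[0,n)$; comparing the parts of $(A+C)\cup(n+A+D)$ below $n$ and at least $n$ with those of $X\cup(n+X)$ gives $A+C=X$ and $A+D=X$, and $0\in A+D$ forces $0\in D$, so $A,C,D\in\mathcal{H}$. Finally, if $t\in\mathcal{H}$ divides both $C$ and $D$, write $C=t+\widetilde C$, $D=t+\widetilde D$; then $U_\ell=t+(\widetilde C\cup(n+\widetilde D))$ with $\widetilde C\cup(n+\widetilde D)\in\mathcal{H}\setminus\{\{0\}\}$, so $U_\ell\in\mathcal{A}(\mathcal{H})$ forces $t=\{0\}$, i.e.\ $C$ and $D$ are relatively prime. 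Thus $(A,C,D)\in\mathcal{M}$, $A\in\mathcal{N}$, and $U_1\cdots U_\ell=(C\cup(n+D))(U_1\cdots U_{\ell-1})\in(C\cup(n+D))\mathsf{Z}(A)$.

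For the reverse inclusion, fix $(A,C,D)\in\mathcal{M}$. Then $(C\cup(n+D))+A=(A+C)\cup(n+A+D)=X\cup(n+X)=W$. Moreover $C\cup(n+D)\in\mathcal{A}(\mathcal{H})$: it is in $\mathcal{H}$ and $\neq\{0\}$ (it contains $n$), and if $C\cup(n+D)=P+Q$ with $P,Q\in\mathcal{H}$, then the same max/gap reasoning (using $\max(C\cup(n+D))\geq n>2\max(X)$ and $C,D\subseteq X$) forces one factor, say $Q$, into $[0,\max(X)]$ and writes $P=P_0\cup(n+P_1)$ with $P_0=P\cap[0,\max(X)]\ni0$ and $P_1\neq\emptyset$; comparing the parts of $(P_0+Q)\cup(n+P_1+Q)$ below $n$ and at least $n$ with those of $C\cup(n+D)$ gives $C=P_0+Q$ and $D=P_1+Q$, so $Q$ divides both $C$ and $D$, whence $Q=\{0\}$ by relative primality. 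Hence for every $z\in\mathsf{Z}(A)$ the product $(C\cup(n+D))z$ lies in $\mathsf{Z}(W)$, which proves ``$\supseteq$'' and so the first identity. The second identity then follows by reading off lengths: every $z\in\mathsf{Z}(W)$ has the form $(C\cup(n+D))z'$ with $(A,C,D)\in\mathcal{M}$ and $z'\in\mathsf{Z}(A)$, and $|z|=1+|z'|$, so $\mathsf{L}(W)=1+\bigcup_{A\in\mathcal{N}}\{|z'|\mid z'\in\mathsf{Z}(A)\}=1+\bigcup_{A\in\mathcal{N}}\mathsf{L}(A)$, where nonemptiness of each $\mathsf{Z}(A)$, $A\in\mathcal{N}$, uses that $\mathcal{H}$ is atomic.

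I expect the dichotomy of the first paragraph — that in any factorization of $W$ precisely one atom carries the shift by $n$ — to be the only real obstacle; its proof is the elementary but slightly fiddly partial-sum induction that genuinely needs $n>2\max(X)$. Everything after it is bookkeeping with unions of sumsets and with the definitions of atom and of relatively prime elements, together with the elementary fact that intersecting the union of a set contained in $[0,n)$ and a set contained in $[n,\infty)$ with those two intervals recovers the two pieces.
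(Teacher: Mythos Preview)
Your proof is correct and follows essentially the same approach as the paper's: both hinge on the gap argument that in any atomic factorization of $X+\{0,n\}$ exactly one atom has maximum at least $n$ (proved via the partial-sums-of-maxima trick, which is also the paper's device), and then split that atom as $C\cup(n+D)$ and compare low and high parts. The only organizational difference is that the paper packages the splitting and the equivalence ``$C,D$ relatively prime $\Leftrightarrow$ $C\cup(n+D)$ is an atom'' into two preliminary claims, whereas you inline these arguments.
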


\begin{proof}
Claim 1: If $A,B\in\mathcal{H}$ are such that $\max(A)\leq\max(B)$ and $A+B=X+\{0,n\}$, then $B=C\cup (n+D)$ for some $C,D\in\mathcal{H}$ with $A+C=A+D=X$. Let $A,B\in\mathcal{H}$ be such that $\max(A)\leq\max(B)$ and $A+B=X+\{0,n\}$. Observe that $2\max(A)\leq\max(A+B)=\max(X+\{0,n\})<2n$ and $A+B=X\cup (n+X)\subseteq [0,\max(X)]\cup [n,n+\max(X)]$, and thus $\max(A)\leq\max(X)$. There are some $a\in A$ and $b\in B$ such that $n=a+b$. If $b\in X$, then $n=a+b\leq 2\max(X)$, a contradiction. It follows that $b\not\in X$. Clearly, there is some $c\in X$ such that $b=c+n$, and hence $a=0$ and $n\in B$. Set $C=B\cap X$ and $D=\{x\in X\mid n+x\in B\}$. Then $C,D\in\mathcal{H}$ and $B=B\cap (X\cup (n+X))=C\cup (n+D)$. Also note that $X\cup (n+X)=A+B=(A+C)\cup (A+(n+D))=(A+C)\cup (n+(A+D))$. Since $\max(X)<n$ and $\max(A+C)<n$, we infer that $A+C=X$ and $n+(A+D)=n+X$. Therefore, $A+D=X$.

\medskip
Claim 2: If $A,C,D\in\mathcal{H}$ are such that $A+C=A+D=X$, then $C$ and $D$ are relatively prime if and only if $C\cup (n+D)\in\mathcal{A}(\mathcal{H})$. Let $A,C,D\in\mathcal{H}$ be such that $A+C=A+D=X$. Set $B=C\cup (n+D)$. Then clearly $B\in\mathcal{H}$, $B\not=\{0\}$ (since $n\in B$) and $A+B=(A+C)\cup (A+(n+D))=X\cup (n+(A+D))=X\cup (n+X)=X+\{0,n\}$. Observe that $\max(A)\leq\max(X)<n\leq\max(B)$.

\smallskip
First let $C$ and $D$ be relatively prime and let $Y,Z\in\mathcal{H}$ be such that $\max(Y)\leq\max(Z)$ and $B=Y+Z$. Then $(A+Y)+Z=X+\{0,n\}$. Note that $\max(Y)\leq\max(X)$ (since $Y\subseteq X+\{0,n\}\subseteq [0,\max(X)]\cup [n,n+\max(X)]$ and $\max(B)<2n$). Consequently, $\max(A+Y)\leq 2\max(X)<n\leq\max(Z)$, and thus $Z=C^{\prime}+(n+D^{\prime})$ for some $C^{\prime},D^{\prime}\in\mathcal{H}$ with $(A+Y)+C^{\prime}=(A+Y)+D^{\prime}=X$ by Claim 1. Since $C\cup (n+D)=Y+Z=(Y+C^{\prime})\cup (n+(Y+D^{\prime}))$, $\max(C)<n$ and $\max(Y+C^{\prime})<n$, we obtain that $C=Y+C^{\prime}$ and $D=Y+D^{\prime}$. Consequently, $Y=\{0\}$ (since $C$ and $D$ are relatively prime). We infer that $B\in\mathcal{A}(\mathcal{H})$.

\smallskip
Now let $B\in\mathcal{A}(\mathcal{H})$ and let $A^{\prime},C^{\prime},D^{\prime}\in\mathcal{H}$ be such that $C=A^{\prime}+C^{\prime}$ and $D=A^{\prime}+D^{\prime}$. Then $B=A^{\prime}+(C^{\prime}\cup (n+D^{\prime}))$ and since $C^{\prime}\cup (n+D^{\prime})\not=\{0\}$, we conclude that $A^{\prime}=\{0\}$. Therefore, $C$ and $D$ are relatively prime.

\medskip
First we show that $\mathsf{Z}(X+\{0,n\})=\bigcup_{(A,C,D)\in\mathcal{M}} (C\cup (n+D))\mathsf{Z}(A)$. If $(A,C,D)\in\mathcal{M}$, then $C\cup (n+D)\in\mathcal{A}(\mathcal{H})$ by Claim 2 and $(C\cup (n+D))+A=(A+C)\cup (n+(A+D))=X+\{0,n\}$, and thus $(C\cup (n+D))\mathsf{Z}(A)\subseteq\mathsf{Z}(X+\{0,n\})$. Now let $z\in\mathsf{Z}(X+\{0,n\})$. Then there are some $m\in\mathbb{N}$ and atoms $(X_i)_{i=1}^m$ of $\mathcal{H}$ such that $z=\prod_{i=1}^m X_i$ and $\max(X_i)\leq\max(X_{i+1})$ for each $i\in [1,m-1]$. Observe that $\sum_{i=1}^m X_i=X+\{0,n\}$ and $\sum_{i=1}^m\max(X_i)=\max(X)+n$. Set $r=\max(\{j\in [0,m]\mid\sum_{i=1}^j\max(X_i)\leq\max(X)\})$, $A=\sum_{i=1}^r X_i$ and $B=\sum_{i=r+1}^m X_i$. Then $A,B\in\mathcal{H}$ and $A+B=X+\{0,n\}$. Since $\max(A)+\max(B)=\max(X)+n$ and $\max(A)\leq\max(X)$, we have that $\max(A)\leq\max(X)<n\leq\max(B)$. If $\max(X_{r+1})\leq\max(X)$, then $\max(X)<\sum_{i=1}^{r+1}\max(X_i)\leq 2\max(X)<n$ and $\sum_{i=1}^{r+1}\max(X_i)\in X+\{0,n\}$, a contradiction. This implies that $\max(X_i)\geq n$ for each $i\in [r+1,m]$. If $m>r+1$, then $\max(X_{r+1})+\max(X_m)\in X+\{0,n\}$, and thus $2n\leq\max(X_{r+1})+\max(X_m)\leq\max(X)+n$, a contradiction. Therefore, $m=r+1$ and $B=X_m\in\mathcal{A}(\mathcal{H})$. If follows from Claims 1 and 2 that there are some $C,D\in\mathcal{H}$ such that $(A,C,D)\in\mathcal{M}$ and $B=C\cup (n+D)$. Moreover, $z=(C\cup (n+D))\prod_{i=1}^r X_i\in (C\cup (n+D))\mathsf{Z}(A)$.

\medskip
Finally, $\mathsf{L}(X+\{0,n\})=\{|z|\mid z\in\mathsf{Z}(X+\{0,n\})\}=\bigcup_{(A,C,D)\in\mathcal{M}}\{|(C\cup (n+D))z|\mid z\in\mathsf{Z}(A)\}=\bigcup_{(A,C,D)\in\mathcal{M}}\{1+|z|\mid z\in\mathsf{Z}(A)\}=1+\bigcup_{(A,C,D)\in\mathcal{M}}\mathsf{L}(A)=1+\bigcup_{A\in\mathcal{N}}\mathsf{L}(A)$ by Claim 2.
\end{proof}

\begin{example}\label{Example 3.7}
Let $\mathcal{H}=\mathcal{P}_{{\rm fin},0}(\mathbb{N}_0)$, let $X=\{0,1,4,5,10,11,12,14,15,16,19,20,21,22,25,26,29,30\}$ and let $Y=X+\{0,61\}$. Then $\mathsf{L}(X)=\{2,3,4\}$, $\mathsf{L}(Y)=\{3,4,5\}$ and $X$ and $Y$ are not relatively cancellative.
\end{example}

\begin{proof}
Let $A,B\in\mathcal{H}$ be such that $0<\max(A)\leq\max(B)$ and $A+B=X$. Note that $(\max(A),\max(B))\in\{(1,29),(4,26),(5,25),(10,20),(11,19),(14,16),(15,15)\}$.

\smallskip
Case 1: $\max(A)=1$. Then $A=\{0,1\}$ and it follows that $\{0,4,10,11,14,15,19,21,25,29\}\subseteq B\subseteq\{0,4,10,11,14,15,19,20,21,25,29\}$. Also note that $B\not\in\mathcal{A}(\mathcal{H})$.

\smallskip
Case 2: $\max(A)=4$. Since $27\not\in X$, we obtain $A=\{0,4\}$ and $B=\{0,1,10,11,12,15,16,21,22,25,26\}$. Observe that $B\not\in\mathcal{A}(\mathcal{H})$.

\smallskip
Case 3: $\max(A)=5$. Since $6\not\in X$ and $1\in A\cup B$, it follows that $1\in A$. Moreover, $9\not\in X$ and $4\in A\cup B$, and thus $4\in A$. Therefore, $A=\{0,1,4,5\}$ and $B=\{0,10,11,15,21,25\}$. Furthermore, $\{A,B\}\cap\mathcal{A}(\mathcal{H})=\emptyset$.

\smallskip
Case 4: $\max(A)=10$. Observe that $19\in B$ (since $29\in X$). This implies that $\{4,5\}\cap A=\emptyset$. Moreover, $16\in B$ (since $26\in X$), and so $1\not\in A$. We infer that $A=\{0,10\}$ and $\{0,1,4,5,11,12,15,16,19,20\}\subseteq B\subseteq\{0,1,4,5,10,11,12,15,16,19,20\}$. Note that $B\not\in\mathcal{A}(\mathcal{H})$.

\smallskip
Case 5: $\max(A)=11$. Clearly, $\{4,5\}\cap A=\emptyset$ (since $19\in B$). Since $29\in X$, we have that $10\in A$. Consequently, $\{0,10,11\}\subseteq A\subseteq\{0,1,10,11\}$. If $A=\{0,10,11\}$, then $B=\{0,1,4,5,10,11,15,19\}$. If $A=\{0,1,10,11\}$, then $\{0,4,11,15,19\}\subseteq B\subseteq\{0,4,10,11,15,19\}$. We want to emphasize that $\{0,1,10,11\}\not\in\mathcal{A}(\mathcal{H})$ and $\{0,4,11,15,19\}\not\in\mathcal{A}(\mathcal{H})$.

\smallskip
Case 6: $\max(A)=14$. Then $\{1,11,12\}\cap A=\emptyset$ (since $16\in B$) and $\{4,10,14\}\cap B=\emptyset$. This implies that $1\in B$ and $4\in A$, and hence $5\not\in A\cup B$. In particular, $10\in A$ (since $10\in X$). Observe that $A=\{0,4,10,14\}\not\in\mathcal{A}(\mathcal{H})$ and $B=\{0,1,11,12,15,16\}\not\in\mathcal{A}(\mathcal{H})$.

\smallskip
Case 7: $\max(A)=15$. Then $12\not\in A\cup B$. First let $1\in B$. Then $\{1,5\}\cap A=\emptyset$. Since $22\in X$, we have that $11\in A\cap B$. Note that $\{4,10,14\}\subseteq A\cup B$ (since $\{4,10,29\}\subseteq X$), and thus $10\not\in A\cap B$ (since $14\in A\cup B$). We infer that $5\in B$ (since $20\in X$), and hence $4\not\in A$ and $4\in B$. Therefore, $14\in B$, and thus $10\in B$. We infer that $\{10,14\}\cap A=\emptyset$. Observe that $A=\{0,11,15\}$ and $B=\{0,1,4,5,10,11,14,15\}\not\in\mathcal{A}(\mathcal{H})$. If $1\not\in B$, then it follows by analogy that $A=\{0,1,4,5,10,11,14,15\}\not\in\mathcal{A}(\mathcal{H})$ and $B=\{0,11,15\}$.

\smallskip
Observe that $\{0,4,10,11,15,19\}\in\mathcal{A}(\mathcal{H})$ (for if $C,D\in\mathcal{H}$ are such that $0<\max(C)\leq\max(D)$ and $C+D=\{0,4,10,11,15,19\}$, then $\max(C)=4$, $10\in D$ and $14\in C+D$, a contradiction). We infer by analogy that $\{0,1,4,5,10,11,15,19\}\in\mathcal{A}(\mathcal{H})$. Therefore, $\{U\in\mathcal{A}(\mathcal{H})\mid U\mid_{\mathcal{H}} X\}=\{\{0,1\},\{0,4\},\{0,10\},\{0,10,11\},\{0,11,15\},\{0,4,10,11,15,19\},\{0,1,4,5,10,11,15,19\}\}$ as shown in the case analysis. It follows that $\mathsf{Z}(X)=\{\{0,1\}\{0,4\}\{0,10\}\{0,11,15\},\{0,1\}\{0,10\}\{0,4,10,11,15,19\}\}\cup\{\{0,10,11\}\{0,1,4,5,10,11,15,19\}\}$.

\smallskip
Obviously, $\mathsf{L}(X)=\{2,3,4\}$ and $\{U\in\mathcal{H}\mid U+C=U+D=X$ for some relatively prime $C,D\in\mathcal{H}\}=\{\{0,1,10,11\},X\}$. Therefore, $\mathsf{L}(Y)=1+(\mathsf{L}(\{0,1,10,11\})\cup\mathsf{L}(X))=\{3,4,5\}$ by Proposition~\ref{Proposition 3.6}. It is clear that $X$ and $Y$ are not relatively cancellative.
\end{proof}

Now we are prepared to give the proof of Theorem~\ref{Theorem 1.1}.

\begin{proof}
(1) Set $\mathcal{H}^{\prime}=\mathcal{P}_{{\rm fin},0}(\mathbb{N}_0)$ and let $n\in\mathbb{N}_{\geq 3}$. It follows from Lemma~\ref{Lemma 3.4} and Theorem~\ref{Theorem 3.5}(2) that there is some relatively cancellative $Y\in\mathcal{H}^{\prime}$ such that $\mathsf{L}(Y)=\{2,n\}$. Moreover, $\{0,1\}\in\mathcal{H}^{\prime}$ is relatively cancellative and $\mathsf{L}(\{0,1\})=\{1\}$. We infer by induction from Corollary~\ref{Corollary 3.2} that $\mathcal{H}^*\subseteq\mathcal{L}(\mathcal{H}^{\prime})$. Finally, $\mathcal{L}(\mathcal{H}^{\prime})\subseteq\mathcal{L}(\mathcal{H})$ by \cite[Theorem 4.11]{FaTr18}.

\medskip
(2) Let $k\in\mathbb{N}_{\geq 2}$. If $k\in\{2,3\}$, then $[k,k+2]\in\mathcal{L}(\mathcal{H})$ by Example~\ref{Example 3.7} and \cite[Theorem 4.11]{FaTr18}. If $k\geq 4$, then $[k,k+2]=\{k-4\}+\{2,3\}+\{2,3\}\in\mathcal{H}^*\subseteq\mathcal{L}(\mathcal{H})$ by (1).

\medskip
(3) It follows from \cite[Proposition 3.2]{FaTr18} that $\mathcal{H}$ is atomic. Moreover, $\{\{m,n\}\mid m,n\in\mathbb{N}_{\geq 2},n\geq m\}\subseteq\mathcal{L}(\mathcal{H})$ by (1). Let $r\in\mathbb{Q}_{\geq 1}$ be such that $r<\rho(\mathcal{H})$. Then there are some $n,m\in\mathbb{N}_{\geq 2}$ and $a\in\mathcal{H}$ such that $n\geq m$, $r=\frac{n}{m}$ and $\mathsf{L}(a)=\{m,n\}$. Observe that $\rho(a)=r$. Consequently, $\mathcal{H}$ is fully elastic.
\end{proof}

\end{document}